\definecolor{refkey}{gray}{.75}
\definecolor{labelkey}{gray}{.75}
\newcommand{\Z}{\mathbb Z}
\newcommand{\N}{\mathbb N}
\newcommand{\Prob}{\mathbb P}
\newcommand{\cH}{\mathcal{H}}
\newcommand{\pr}{\mathbb P}
\newtheorem{teo}{Theorem}[section]
\newtheorem{lem}[teo]{Lemma}
\newtheorem{rem}[teo]{Remark}
\newtheorem{pro}[teo]{Proposition}
\newtheorem{defn}[teo]{Definition}
\newtheorem{exmp}[teo]{Example}
\title
{Branching random walks with uncountably many \\ extinction probability vectors}
\author[D.~Bertacchi]{Daniela Bertacchi}
\address{D.~Bertacchi, Dipartimento di Matematica e Applicazioni,
Universit\`a di Milano--Bicocca,
via Cozzi 53, 20125 Milano, Italy.}
\email{daniela.bertacchi\@@unimib.it}
\author[F.~Zucca]{Fabio Zucca}
\address{F.~Zucca, Dipartimento di Matematica,
Politecnico di Milano,
Piazza Leonardo da Vinci 32, 20133 Milano, Italy.}
\email{fabio.zucca\@@polimi.it}
\date{}
\begin{document}

\begin{abstract}
Given a branching random walk on a set $X$, we study its extinction probability vectors $\mathbf q(\cdot,A)$. Their components
are the probability that the process goes extinct in a fixed $A\subseteq X$, when starting from a vertex $x\in X$.
The set of extinction probability vectors (obtained letting $A$ vary among all subsets of $X$) is a
subset of the set of the fixed points of the generating function of the branching random walk. 
In particular here we are interested in the cardinality of the set of extinction probability vectors.
We prove results which allow to understand whether the probability of extinction in a set $A$ is
different from the one of extinction in another set $B$.
In many cases there are only two possible extinction probability vectors and 
so far, in more complicated examples, only a finite number of distinct extinction probability vectors 
had been explicitly found.
Whether a branching random walk could have an infinite number of distinct extinction probability vectors was not
known. We apply our results to construct examples of branching random walks with uncountably many distinct
extinction probability vectors.
\end{abstract}


\maketitle
\noindent {\bf Keywords}: branching random walk, generating function, fixed point, extinction probability vectors, tree, comb.

\noindent {\bf AMS subject classification}: 60J80.

\section{Introduction}
\label{sec:intro}

The branching random walk (or briefly $\textit{BRW}$) on an at most countable set $X$
is a process which describes the evolution of a population breeding and dying on $X$.
When $X$ is a singleton, the BRW reduces to the \textit{branching process} (the BRW is 
also known as \textit{multi-type branching process}). In the long run, for any $A\subseteq X$,
a BRW starting with one individual at $x\in X$ can go extinct in $A$ (no individuals alive in 
$A$ from a certain time on) or survive in $A$ (infinitely many visits to $A$).
If the probability of extinction in $A$ is equal to 1, we say that there is extinction in $A$,
otherwise that there is survival in $A$.

Letting $x$ vary in $X$ we get an extinction probability vector, and letting also $A$ vary we have
the family of extinction probability vectors.
These vectors are of particular interest and can be seen as fixed points of a suitable generating function 
 associated to the process (see Section~\ref{sec:basic}).
 
It is well known that the generating function of the branching process has at most 
two fixed points: the extinction probability and 1. When the space $X$ is not a singleton, there are, in principle, more extinction
probabilities (see Section~\ref{sec:basic}), thus more fixed points. It remains
true that the vector $\mathbf 1$ is always a fixed point and the \textit{global extinction}
probability vector (that is, the probability of extinction in the whole space $X$) is always the minimal fixed point.

In order to construct a BRW with a large number of fixed points, a trivial way is to
use \textit{reducible} BRWs (see Section~\ref{sec:basic} for the definition).
Roughly speaking, in the reducible case, $X$ is divided into classes and the progeny of particles living in some 
classes cannot colonize other ones, and it is not difficult to have different extinction probability vectors in each class.
The interesting case is when the BRW is \textit{irreducible}, i.e.~the progeny of a particle has always a positive probability of reaching every 
point of $X$. Therefore our study addresses the question whether an irreducible BRW can have infinitely many distinct extinction probability
vectors.

It turns out that for any irreducible BRW on a finite $X$ the situation is the same as in a branching process:
there are at most two fixed points. In particular this means that the value of the probability of extinction in a (nonempty) set $A$
does not depend on $A$. The interests on
fixed points slowed down when in \cite{cf:Spataru89} the author claimed that the generating function
of every irreducible BRW on a finite or countable $X$ has at most two fixed points.
In particular, if this were true, in any BRW the probability of extinction in $A \subseteq X$ would be either 1
or equal to the probability of global extinction (this last case is called strong local survival at $A$, see 
\cite{cf:BZ14-SLS, cf:MachadoMenshikovPopov}).

In \cite{cf:BZ14-SLS}
the authors found a gap in the proof of \cite[Theorem 3]{cf:Spataru89} and provided an example of
an irreducible BRW 
with more than two fixed points.
Since the,n some authors have been trying to describe the properties of the space of fixed points and the
subspace of extinction probabilities (see Section~\ref{sec:open} for further details on the state of the art). 

The first example of a BRW with (at least) two nontrivial extinction probabilities is in \cite[Example 4.2]{cf:BZ14-SLS}
while the first example of a BRW with (at least) three nontrivial extinction probabilities can be found  in \cite{cf:BraHautHessemberg2}.
Both BRWs are inhomogeneous, that is, the reproduction laws differ from site to site. 
It is natural to wonder whether a BRW can have infinitely many extinction probability vectors and in particular if this is
possible without ``inhomogeneity''. Perhaps the strongest ``homogeneity'' one can think of is transience, that is when
one can map every site to any other site through an automorphism (see Section~\ref{sec:basic}).
In this paper we provide an example of a transient BRW on a regular tree with an uncountable number 
of distinct extinction probabilities (see Section~\ref{exmp:tree}). 
The key to the proof is finding an uncountable family $\mathcal F$ of unions of subtrees such that the probability
that the process starting from the root goes extinct in $A$,
is different for all $A\in \mathcal F$.
Two members of $\mathcal F$ have different ``size of their boundaries'', so at first one may think that the 
probability of extinction, starting from the root, only depend on this size, but we prove that this is not the case
(again see Section~\ref{exmp:tree}).

In the example in Section~\ref{exmp:tree} there is extinction in all finite sets;
this is not necessary, indeed Example~\ref{exmp:modifiedBRW} is an inhomogeneous BRW on the tree with uncountably many extinction
probability vectors and survival with positive probability in each finite set.
The tree is a graph with a particularly ``large'' boundary, but even this property is not necessary:
in  Section~\ref{subsec:comb} we generalize our result to a wider class of BRWs with uncountably 
many extinction probabilities, supported on the comb,
which is a subset of $\Z^2$.

In Section~\ref{sec:basic} we define the process in discrete-time and in continuous-time,
survival and extinction and then define the generating function of a BRW (Section~\ref{subsec:genfun}), its fixed
points and the extinction probability vectors (Section~\ref{sec:extprobvect}).
We state and prove two results which tell when two extinction probabilities are equal or not. In particular
Theorem~\ref{th:strongconditioned} compares two BRWs whose
reproduction laws are the same outside a set $A \subset X$, while Lemma~\ref{lem:smallersurvival}
deals with the case when the process survives in a set, without ever visiting another set.
%
In Section~\ref{exmp:tree} we prove in detail that on the tree the generating function
of a BRW can have uncountably many extinction probabilities. This is not just a property of the tree; indeed, by projecting
 BRWs (see Definition~\ref{def:projection}) we prove in Section~\ref{subsec:comb}, that the same holds for a larger class (up
 to projections) of BRWs. Finally, Section~\ref{sec:open} is devoted to a brief description of the state of the art on extinction
 probabilities and fixed points and contains some open questions.

\section{Extinction probabilities: definitions and properties}
\label{sec:basic}

Given an at most countable set $X$, a discrete-time BRW is a process $\{\eta_n\}_{n \in \N}$, 
where $\eta_n(x)$ is the number of particles alive
at $x \in X$ at time $n$. 
The dynamics is described as follows:
consider the (countable) measurable space $(S_X,2^{S_X})$
where $S_X:=\{f:X \to \N\colon \sum_yf(y)<\infty\}$ and let 
$\mu=\{\mu_x\}_{x \in X}$ be a family of probability measures
on $(S_X,2^{S_X})$.
A particle of generation $n$ at site $x\in X$ lives one unit of time;
after that, a function $f \in S_X$ is chosen at random according to the law $\mu_x$.
This function describes the number of children and their positions, that is,
the original particle is replaced by $f(y)$ particles at
$y$, for all $y \in X$. The choice of $f$ is independent for all breeding particles.
The BRW is denoted by $(X,\mu)$ and it is a Markov chain with absorbing state $\mathbf 0$,
the configuration with no particles at all sites.

The total number of children associated to $f$ is represented by the
function $\cH:S_X \rightarrow \N$ defined by $\cH(f):=\sum_{y \in X} f(y)$;
the associated law $\rho_x(\cdot):=\mu_x(\cH^{-1}(\cdot))$ is the law of the random number of children
of a particle living at $x$.
We denote by
 $m_{xy}:=\sum_{f\in S_X} f(y)\mu_x(f)$ 
 the expected number of children that a particle living
 at $x$ sends to $y$. It is easy to show that $\sum_{y \in X} m_{xy}=\bar \rho_x$
 where $\bar \rho_x$ is the expected value of the law $\rho_x$.
%


In particular, if $\rho_x$ does not depend on $x \in X$, we say that the BRW can be
\textit{projected on a branching process} (see~\cite{cf:BCZ, cf:BZ4} for details).
More generally, 
some BRWs can be projected onto BRWs defined on finite sets as explained in \cite[Section 2.3]{cf:BZ2017}
(see also Section~\ref{def:projection} for the case of continuous-time BRWs).
In the case of the projection on a branching process, the finite set is a singleton.
Other examples are the so called \textit{quasi-transitive BRWs} 
(see \cite[Section 2.4]{cf:BZ14-SLS} for the formal definition) where the action of the group of the automorphisms of 
the BRW (namely, bijective maps preserving the reproduction laws) has a finite number $j$ of orbits:
the finite set onto which we project has cardinality $j$.
When there is just one orbit, then it is called \textit{transitive} (which is thus a particular case of BRW projected on a
branching process).


To  a generic discrete-time BRW we associate a graph $(X,E_\mu)$ where $(x,y) \in E_\mu$  
if and only if $m_{xy}>0$.
We say that there is a path from $x$ to $y$ of length $n$, and we write $x \stackrel{n}{\to} y$, if it is
possible to find a finite sequence $\{x_i\}_{i=0}^n$ (where $n \in \N$)
such that $x_0=x$, $x_n=y$ and $(x_i,x_{i+1}) \in E_\mu$
for all $i=0, \ldots, n-1$. Clearly $x \stackrel{0}{\to}x$ for all $x \in X$; if there exists $n \in \N$
such that $x \stackrel{n}{\to} y$ then we write $x \to y$.
If the graph $(X,E_\mu)$ is \textit{connected} 
then we say that the BRW 
is \textit{irreducible}. 

%

We consider initial configurations with only one particle placed at a fixed site $x$ and we
denote by $\pr^{\delta_{x}}$ the law of the corresponding process. 
We now distinguish between the possible long-term behaviours of a BRW.
\begin{defn}\label{def:survival} $\ $
\begin{enumerate}
 \item 
The process \textsl{survives locally 
} in $A \subseteq X$ starting from $x \in X$ if
$
{\mathbf{q}}(x,A)
:=1-\pr^{\delta_x}(\limsup_{n \to \infty} \sum_{y \in A} \eta_n(y)>0)<1.
$
\item
The process \textsl{survives globally 
} starting from $x$ if
$
\bar {\mathbf{q}}(x)
:={\mathbf{q}}(x,X) 
<1$.
\end{enumerate}
\end{defn}
\noindent
In the rest of the paper we use the notation ${\mathbf{q}}(x,y)$ instead of ${\mathbf{q}}(x, \{y\})$ for all $x,y \in X$.
When there is no survival with positive probability, we say that there is extinction
and the fact that extinction occurs 
almost surely
will be tacitly understood. 
It is worth noting that, in the irreducible case, for every $A \subseteq X$, the inequality ${\mathbf{q}}(x,A)<{1}$ holds for some $x \in X$ if and only if it
holds for every $x \in X$.
For details and results on survival and extinction see for instance Section~\ref{sec:extprobvect} or \cite{cf:BZ4, cf:Z1}.

%

\subsection{Continuous-time branching random walks}
\label{subsec:continuous}

In continuous time each particle has an exponentially distributed
random lifetime with parameter 1. The breeding mechanisms can
be regulated by putting on each couple $(x,y)$ and for each particle at $x$,
a clock with $Exp(\lambda k_{xy})$-distributed intervals (where $\lambda>0$),
each time the clock
rings the particle breeds in $y$.

%

If one is only interested in survival and extinction of the process, the
continuous-time BRW
has a discrete-time counterpart with the same long-term behavior:
here is the construction.
The initial particles represent the generation $0$ of the discrete-time BRW;
the generation $n+1$ (for all $n \ge 0$) is obtained by
considering the children of all the particles of generation $n$
(along with their positions).


If $X$ has a graph structure and $(k_{xy})_{x,y \in X}$ is the adjacency matrix 
then we call the process an \textit{edge-breeding BRW}; in this case the
graph $(X, E_{\mu})$ associated to the discrete-time counterpart is the preexisting
graph on $X$. In particular an edge-breeding BRW is quasi-transitive if and only if the underlying
graph is.

Given $x \in X$, two critical parameters are associated to the
continuous-time BRW: the global 
survival
critical parameter $\lambda_w(x)$ and the local 
survival one $\lambda_s(x)$.
They are defined as
\begin{equation}\label{eq:criticalparameters}
\begin{split}
\lambda_w(x)
:=\inf\{\lambda>0:\,\pr^{\delta_{x}}\left(\exists t:\eta_t=\mathbf{0}\right)<1\}, \quad 
\lambda_s(x)
:=\inf\{\lambda>0:\,\pr^{\delta_{x}}\left(\exists \bar t:\eta_t(x)=0,\,\forall t\ge\bar t\right)<1\}.
  \end{split}
\end{equation}
n particular when
$\lambda < \lambda_w(x)$ (resp.~$\lambda >\lambda_w(x)$) then $\bar{\mathbf{q}}(x)=1$ (resp.~$\bar{\mathbf{q}}(x)<1$);
while if $\lambda = \lambda_w(x)$ there can be both global extintion and global survival (see for instance \cite{cf:BZ2}). 
As for the local behavior $\lambda \le \lambda_s(x)$ if and only if $\mathbf{q}(x,x)=1$.
If the process is irreducible then the critical parameters do not depend on $x$. 
See \cite{cf:BCZ, cf:BZ,cf:BZ2, cf:BZ4} for a more detailed
discussion on the values of $\lambda_w(x)$ and $\lambda_s(x)$, including their characterizations.

\subsection{Infinite-dimensional generating function}\label{subsec:genfun}

To the family $\{\mu_x\}_{x \in X}$, we associate the following generating function $G:[0,1]^X \to [0,1]^X$,
\[ 
G({\mathbf{z}}|x):= \sum_{f \in S_X} \mu_x(f) \prod_{y \in X} {\mathbf{z}}(y)^{f(y)},
\] 
where $G({\mathbf{z}}|x)$ is the $x$ coordinate of $G({\mathbf{z}})$.
%
%
The family $\{\mu_x\}_{x \in X}$ is uniquely determined by $G$. 
$G$ is continuous with respect to the \textit{pointwise convergence topology} of $[0,1]^X$  and nondecreasing
with respect to the usual partial order of $[0,1]^X$ (see \cite[Sections 2 and 3]{cf:BZ2} for further details).
Extinction probabilities are fixed points
of $G$ and the smallest fixed point is $\bar {\mathbf{q}}$ (see Section~\ref{sec:extprobvect} for details):
more generally, given a solution of $G(\mathbf{z}) \le \mathbf{z}$ then $\mathbf z \ge \bar {\mathbf{q}}$.

Consider now the closed sets $F_G:=\{\mathbf{z} \in [0,1]^X \colon G(\mathbf{z})=\mathbf{z}\}$, 
$U_G:=\{\mathbf{z} \in [0,1]^X \colon G(\mathbf{z}) \le \mathbf{z}\}$ and $L_G:=\{\mathbf{z} \in [0,1]^X \colon G(\mathbf{z})\ge \mathbf{z}\}$;
clearly $F_G = U_G \cap L_G$. Moreover, by the monotonicity property, $G(U_G) \subseteq U_G$ and $G(L_G) \subseteq L_G)$.
The iteration of $G$ produces sequences converging to fixed points.

\begin{pro}\label{pro:closed}
 Consider a sequence $\{\mathbf{z}_n\}_{n \in \N}$ in $[0,1]^X$ such that $\mathbf{z}_{n+1}=G(\mathbf{z}_n)$ for all $n \in \N$ and suppose that  
 $\mathbf{z}_n \to \mathbf{z}$ as $n \to +\infty$ for some $\mathbf{z} \in [0,1]^X$. Then 
 $\mathbf{z} \in F_G$.  
 Moreover, fix $\mathbf{w} \in [0,1]^X$. 
\begin{enumerate}
 \item If $\mathbf{w} \in U_G$ 
 then $\mathbf{w} \ge \mathbf{z}_0$ implies $\mathbf{w} \ge \mathbf{z}$ (the converse holds for 
 $\mathbf{z}_0 \in L_G$).
 \item If $\mathbf{w} \in L_G$ 
 then $\mathbf{w} \le \mathbf{z}_0$ implies $\mathbf{w} \le \mathbf{z}$ (the converse holds for 
 $\mathbf{z}_0 \in U_G$).
\end{enumerate} 
 \end{pro}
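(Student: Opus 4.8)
The plan is to treat the three assertions separately, each by a short argument that uses only the two properties of $G$ recorded just above the statement: continuity for the pointwise convergence topology and monotonicity for the partial order of $[0,1]^X$. For the first claim, that $\mathbf{z}\in F_G$, I would argue as follows: from $\mathbf{z}_n\to\mathbf{z}$ and continuity one gets $G(\mathbf{z}_n)\to G(\mathbf{z})$; but $G(\mathbf{z}_n)=\mathbf{z}_{n+1}$, and the shifted sequence $\{\mathbf{z}_{n+1}\}_{n\in\N}$ still converges to $\mathbf{z}$. Since $[0,1]^X$ with the product topology is Hausdorff, sequential limits are unique, hence $G(\mathbf{z})=\mathbf{z}$.

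For item (1), suppose $\mathbf{w}\in U_G$ and $\mathbf{w}\ge\mathbf{z}_0$. I would prove by induction that $\mathbf{w}\ge\mathbf{z}_n$ for every $n$: the base case is the hypothesis, and if $\mathbf{w}\ge\mathbf{z}_n$ then monotonicity gives $G(\mathbf{w})\ge G(\mathbf{z}_n)=\mathbf{z}_{n+1}$ while $\mathbf{w}\in U_G$ gives $\mathbf{w}\ge G(\mathbf{w})$, so that $\mathbf{w}\ge\mathbf{z}_{n+1}$. Letting $n\to\infty$ and using that weak inequalities pass to the limit coordinatewise, I conclude $\mathbf{w}\ge\mathbf{z}$. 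For the converse statement: if $\mathbf{z}_0\in L_G$ then $\mathbf{z}_1=G(\mathbf{z}_0)\ge\mathbf{z}_0$, and an induction with monotonicity shows that $\{\mathbf{z}_n\}_{n\in\N}$ is nondecreasing; hence $\mathbf{z}=\sup_n\mathbf{z}_n\ge\mathbf{z}_0$, so $\mathbf{w}\ge\mathbf{z}$ forces $\mathbf{w}\ge\mathbf{z}_0$.

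Item (2) mirrors item (1) with all inequalities reversed. Assuming $\mathbf{w}\in L_G$ and $\mathbf{w}\le\mathbf{z}_0$, I would prove inductively that $\mathbf{w}\le\mathbf{z}_n$ for all $n$ (from $\mathbf{w}\le G(\mathbf{w})\le G(\mathbf{z}_n)=\mathbf{z}_{n+1}$) and pass to the limit to obtain $\mathbf{w}\le\mathbf{z}$; for the converse, $\mathbf{z}_0\in U_G$ makes $\{\mathbf{z}_n\}_{n\in\N}$ nonincreasing, so $\mathbf{z}=\inf_n\mathbf{z}_n\le\mathbf{z}_0$ and $\mathbf{w}\le\mathbf{z}$ forces $\mathbf{w}\le\mathbf{z}_0$.

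I do not expect a genuine obstacle here: the argument is entirely soft. The two places that deserve a little care are invoking continuity in the correct (pointwise) topology together with uniqueness of sequential limits in the product space for the first claim, and keeping the direction of the order relations straight in items (1) and (2) --- recalling in particular that $U_G$ encodes $G(\mathbf{z})\le\mathbf{z}$ whereas $L_G$ encodes $G(\mathbf{z})\ge\mathbf{z}$, so that in the ``converse'' parts the extra hypothesis on $\mathbf{z}_0$ is precisely what makes the iterates monotone in the appropriate direction.
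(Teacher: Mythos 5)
Your proof is correct and is exactly the standard argument the authors have in mind: the paper omits the proof, declaring it ``straightforward'' and deferring to \cite{cf:BZ2}, and your combination of continuity plus uniqueness of limits in the product (Hausdorff) topology for the fixed-point claim, together with the inductive order-preservation and monotone-iterates arguments for items (1) and (2), is precisely that straightforward route. No gaps.
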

 The proof is straightforward (see for instance \cite{cf:BZ2}). 
 The convergence of the sequence $\{\mathbf{z}_n\}_{n \in \N}$ defined in the previous proposition holds 
 if $\mathbf{z}_0 \in L_G$ (resp.~$\mathbf{z}_0 \in U_G$): in that case $\mathbf{z}_n \uparrow \mathbf{z}$ (resp.~$\mathbf{z}_n \downarrow \mathbf{z}$) 
 for some $\mathbf{z} \in F_G$.
 
 The following properties of $U_G$ and $L_G$ allow to identify potentially new fixed points:
if we have a collection $\{\mathbf{w}_i\}_{i \in I}$ where $\mathbf{w}_i \in U_G$ (resp.~$\mathbf{w}_i \in L_G$) for all
$i \in I$ and $\mathbf{z}_0(x):=\inf_{i \in I} \mathbf{w}_i(x)$  then 
$\mathbf{z}_0 \in U_G$ (resp.~if $\mathbf{z}_0(x):=\sup_{i \in I} \mathbf{w}_i(x)$ then $\mathbf{z}_0 \in L_G$); for instance it is enough to
consider a collection $\{\mathbf{w}_i\}_{i \in I}$  of fixed points. In both cases $\mathbf{z}=\lim_{n \to +\infty} \mathbf{z}_n$ is well defined; moreover
if $\mathbf{z}_0<\mathbf{w}_i$ (resp.~$\mathbf{z}_0>\mathbf{w}_i$) for all 
$i \in I$ then $\mathbf{z}$ is a fixed point different from $\mathbf{w}_i$ for any $i \in I$. 

\subsection{Fixed points and extinction probabilities}\label{sec:extprobvect}

Define ${\mathbf{q}}_n(x,A)$
as the probability of extinction in $A$ before time $n$ starting with one particle at $x$, namely
${\mathbf{q}}_n(x,A)=\Prob^{\delta_x}(\eta_k(y)=0,\, \forall k\ge n,\,\forall y\in A)$. The sequence
$\{{\mathbf{q}}_n(x,A)\}_{n \in \N}$ is nondecreasing and satisfies
\begin{equation}\label{eq:extprobab}
 \begin{cases}
 \mathbf{q}_{n}(\cdot,A)=G(\mathbf{q}_{n-1}(\cdot, A)),& \quad \forall n \ge 1\\
 \mathbf{q}_0(x,A)=0, &\quad \forall x \in A, \\
 \mathbf{q}_0(x,A)=G(\mathbf{q}_0(\cdot,A)|x) &\quad  \forall x \not \in A,\\
\end{cases}
\end{equation}
Moreover,  ${\mathbf{q}}_n(x, A)$
converges to ${\mathbf{q}}(x,A)$,
which is the probability of local extinction in $A$
starting with one particle at $x$ (see
Definition~\ref{def:survival}); more precisely ${\mathbf{q}}_n(\cdot, A)\in L_G$ for all $n \in \N$. 
Since $G$ is continuous, by Proposition~\ref{pro:closed} we have that ${\mathbf{q}}(\cdot,A)=G({\mathbf{q}}(\cdot,
A))$, hence these extinction probability vectors are
fixed points of $G$.
For details on the last equality in equation~\eqref{eq:extprobab} see Remark~\ref{rem:q0}. 
We denote the set of extinction probability vectors by $E_G:=\{\mathbf{q}(\cdot, A)\colon A \subseteq X\} \supseteq \{\bar{\mathbf{q}}, \mathbf{1}\}$,
since $\mathbf{q}(\cdot,\emptyset)=\mathbf{1}$ where $\mathbf{1}(x)=1$ for all $x \in X$.

Clearly $U_G \supseteq F_G \supseteq E_G$ and 
it is well known that $\bar {\mathbf{q}}$ is the smallest element of each one of these three sets  
(since $\mathbf{q}_0(\cdot,X)=\mathbf{0}$, it is enough to apply Proposition~\ref{pro:closed}) and $\mathbf{1}$ is the largest one.
Hence $\bar {\mathbf{q}}= {\mathbf{1}}$ (global extinction) if and only if at least one of these sets is a singleton, that is, 
if and only if they are all singletons.

Note that $A \subseteq B \subseteq X$ implies ${\mathbf{q}}(\cdot,A)
\ge {\mathbf{q}}(\cdot,B)
\ge \bar {\mathbf{q}}$. 
Since for all finite $A\subseteq X$ we have ${\mathbf{q}}(x,A) \ge 1-\sum_{y \in A} (1-{\mathbf{q}}(x,y))$
then, for any given finite $A \subseteq X$, ${\mathbf{q}}(x,A)=1$ if and only if ${\mathbf{q}}(x,y)=1$ for all $y \in A$.
If the BRW is irreducible then, for all $A \subseteq X$,  ${\mathbf{q}}(\cdot,A)<\mathbf{1}$ if and only if ${\mathbf{q}}(x,A)<1$ for all $x \in X$;
moreover for all finite (nonempty) subsets $A,B \subseteq X$ we have
$\mathbf{q}(\cdot, A)=\mathbf{q}(\cdot, B)$. 
%

\begin{rem}\label{rem:q0}
 We observe that if $d(x,A):=\min\{n \in \N, y \in A \colon x \stackrel{n}{\to}y\}$ then
 $\mathbf{q}_n(x,A)=\mathbf{q}_0(x,A)$ for all $x$ such that $d(x,A) \ge n$. Hence,
 $\mathbf{q}_1(x,A)=\mathbf{q}_0(x,A)$ for all $x \not \in A$ and according to equation~\eqref{eq:extprobab}
 we have $\mathbf{q}_0(x,A)=G(\mathbf{q}_0|x)$ for all $x \not \in A$.
\end{rem}

\subsection{Extinction probabilities in different sets.}
We give here a couple of results which allow, in some cases, to know whether  
${\mathbf{q}}(x, A)$ is different from ${\mathbf{q}}(x, B)$.
The first theorem, is a generalization of \cite[Theorem 3.3]{cf:BZ14-SLS} and \cite[Theorems 4.1 and 4.2]{cf:BZ2017}.
We include the proof for the sake of completeness.
In the case of global survival, it gives equivalent conditions for strong local survival
in terms of extinction probabilities.
\begin{teo}\label{th:strongconditioned}
\begin{enumerate}[a)]
\item For every 
subset $A \subseteq X$ and every fixed point $\mathbf{z}$ of $G$, the following assertions are equivalent.
\begin{enumerate}[(1)]
\item  ${\mathbf{q}}(x, A) \le \mathbf{z}(x)$, for all $x \in X$;
\item ${\mathbf{q}}_0(x,A) \le \mathbf{z}(x)$, for all $x \in X$.
\end{enumerate}
In particular if $\mathbf{z}= {\mathbf{q}(\cdot,B)}$, where $A \subseteq B$, then the previous conditions are equivalent to 
(3) ${\mathbf{q}}(x,A) = {\mathbf{q}}(x, B)$ for all $x \in X$.
\item Consider two BRWs  $(X,\mu)$ and $(X,\nu)$. Suppose that  $A \subseteq X$ is a nonempty set
 such that $\mu_x=\nu_x$ for all $x \not \in A$. Then ${\mathbf{q}^\mu_0}(\cdot, A) = {\mathbf{q}^\nu_0}(\cdot, A)$
 Moreover, if $A \subseteq B$ then
 \[ 
  {\mathbf{q}^\mu}(x, A) = \mathbf{q}^\mu(x,B), \   \forall x \in X, \Longleftrightarrow
  {\mathbf{q}^\nu}(x, A) = \mathbf{q}^\nu(x,B), \  \forall x \in X.
 \]
\end{enumerate}
\end{teo}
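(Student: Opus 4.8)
The plan is to split the statement into part a), the identity $\mathbf q^\mu_0(\cdot,A)=\mathbf q^\nu_0(\cdot,A)$, and the ``moreover'', and for the last two to reduce---by coupling the two BRWs outside $A$---to something that only sees the reproduction laws $\{\mu_z\}_{z\notin A}$. For part a), $(1)\Rightarrow(2)$ is immediate since $\{\mathbf q_n(\cdot,A)\}_n$ is nondecreasing with limit $\mathbf q(\cdot,A)$, so $\mathbf q_0(\cdot,A)\le\mathbf q(\cdot,A)\le\mathbf z$. For $(2)\Rightarrow(1)$ I would apply Proposition~\ref{pro:closed}: by~\eqref{eq:extprobab} the sequence $\mathbf z_n:=\mathbf q_n(\cdot,A)$ satisfies $\mathbf z_{n+1}=G(\mathbf z_n)$, has $\mathbf z_0=\mathbf q_0(\cdot,A)\in L_G$, and increases to $\mathbf q(\cdot,A)$; since the fixed point $\mathbf z$ lies in $U_G$ and, by (2), dominates $\mathbf z_0$, Proposition~\ref{pro:closed}(1) gives $\mathbf z\ge\mathbf q(\cdot,A)$, which is (1). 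For the ``in particular'' clause, $\mathbf q(\cdot,B)\in E_G$ is a fixed point, so $(1)\Leftrightarrow(2)$ applies with $\mathbf z=\mathbf q(\cdot,B)$; and since $A\subseteq B$ forces $\mathbf q(\cdot,A)\ge\mathbf q(\cdot,B)$, condition (1), i.e.\ $\mathbf q(\cdot,A)\le\mathbf q(\cdot,B)$, is equivalent to the equality (3), while $(3)\Rightarrow(1)$ is trivial.

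For the identity $\mathbf q^\mu_0(\cdot,A)=\mathbf q^\nu_0(\cdot,A)$ I would use that, by~\eqref{eq:extprobab} and Remark~\ref{rem:q0}, $\mathbf q_0(x,A)$ is the probability that the BRW started at $x$ never places a particle on $A$. This is $0$ when $x\in A$, and when $x\notin A$ it is determined by $\{\mu_z\}_{z\notin A}$ alone: running the $\mu$-BRW and the $\nu$-BRW with the same offspring samples at every site outside $A$, the two processes coincide up to the first generation (if any) containing a particle in $A$, so the event ``the process never visits $A$'' has the same probability for both. Hence $\mathbf q^\mu_0(\cdot,A)=\mathbf q^\nu_0(\cdot,A)$. (Equivalently: $\mathbf q_0(\cdot,A)$ is the largest solution in $[0,1]^X$ of the system in~\eqref{eq:extprobab}, which is literally identical for $\mu$ and $\nu$.)

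For the ``moreover'', the idea is to reduce $\mathbf q^\pi(\cdot,A)=\mathbf q^\pi(\cdot,B)$ (for $\pi\in\{\mu,\nu\}$) to a $\pi$-independent statement. Let $\sigma^\pi(x)$ be the probability that the $\pi$-BRW started at $x$ survives locally in $B$ while never visiting $A$; by the coupling above, $\sigma^\pi(x)$ is governed by $\{\mu_z\}_{z\notin A}$ only (it vanishes for $x\in A$, and for $x\notin A$ the realizing trajectory stays in $X\setminus A$), so $\sigma^\mu\equiv 0$ iff $\sigma^\nu\equiv 0$. I would then prove, separately for $\mu$ and $\nu$,
\[
\mathbf q^\pi(x,A)=\mathbf q^\pi(x,B)\ \ \forall x\quad\Longleftrightarrow\quad\sigma^\pi\equiv 0 .
\]
Since $A\subseteq B$, the left-hand side is equivalent to $\pr^{\delta_x}_\pi(\{\text{extinction in }A\}\cap\{\text{survival in }B\})=0$ for all $x$; as never visiting $A$ entails extinction in $A$, this forces $\sigma^\pi(x)=0$, giving ``$\Rightarrow$''. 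For ``$\Leftarrow$'', assuming $\sigma^\pi\equiv 0$ and fixing $x$, on $\{\text{extinction in }A\}\cap\{\text{survival in }B\}$ there is a random generation $n$ with $\eta_k(y)=0$ for all $y\in A$ and all $k\ge n$, so it suffices to show that for each fixed $n$, $\pr^{\delta_x}_\pi(\{\eta_k(y)=0\ \forall y\in A,\ \forall k\ge n\}\cap\{\text{survival in }B\})=0$; conditioning on $\cF_n$ and using the branching/Markov property, on $\{\eta_n(y)=0\ \forall y\in A\}$ this probability equals that of the BRW started from $\eta_n$ (a configuration inside $X\setminus A$) surviving in $B$ without ever visiting $A$, which a union bound over the finitely many particles of $\eta_n$, together with the independence of their progenies, bounds by $\sum_z\eta_n(z)\,\sigma^\pi(z)=0$. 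Summing over $n$ yields $\mathbf q^\pi(\cdot,A)=\mathbf q^\pi(\cdot,B)$. Combining the displayed equivalence for $\mu$ and for $\nu$ with $\sigma^\mu\equiv0\Leftrightarrow\sigma^\nu\equiv0$ concludes.

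The step I expect to be the main obstacle is the ``$\Leftarrow$'' direction just described: isolating the last generation that charges $A$ (which relies on ``extinction in $A$'' meaning ``eventually no particle in $A$''), turning the tail after that generation into a fresh BRW from $\eta_n$ via conditioning, and passing from ``survival in $B$ from $\eta_n$ without visiting $A$'' to a sum of the single-site quantities $\sigma^\pi(z)$ through finiteness of each generation and independence of the subtrees rooted at the particles of $\eta_n$. Everything else is bookkeeping around Proposition~\ref{pro:closed} and the observation that $\mathbf q_0(\cdot,A)$ and $\sigma^\pi$ depend only on the reproduction laws outside $A$.
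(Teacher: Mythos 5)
Your proof is correct and follows essentially the same strategy as the paper's. Part a) and the identity $\mathbf{q}^\mu_0(\cdot,A)=\mathbf{q}^\nu_0(\cdot,A)$ are handled exactly as in the paper: monotone iteration of $G$ starting from $\mathbf{q}_0(\cdot,A)$ (the paper does the induction by hand rather than citing Proposition~\ref{pro:closed}, but it is the same computation), and the observation that the event of never visiting $A$ is determined by the reproduction laws outside $A$. The one genuine difference is in the ``moreover'' of part b). The paper obtains the implication ``$\sigma^\pi\equiv 0\Rightarrow \mathbf{q}^\pi(\cdot,A)=\mathbf{q}^\pi(\cdot,B)$'' in contrapositive form directly from part a): failure of the equality gives, via $(1)\Leftrightarrow(2)$, a site $y$ with $\mathbf{q}_0(y,A)>\mathbf{q}(y,B)$, which is read off as a positive probability of surviving in $B$ without ever visiting $A$, and this event transfers between $\mu$ and $\nu$. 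You instead re-prove that implication probabilistically, decomposing over the last generation charging $A$, conditioning on $\cF_n$, and union-bounding by $\sum_z\eta_n(z)\,\sigma^\pi(z)$. Your route costs an extra (correct) argument but is self-contained and makes the transferable obstruction $\sigma^\pi$ explicit and symmetric in $\mu$ and $\nu$; the paper's route is shorter because part a) already packages the generating-function monotonicity needed for exactly that direction. Both arguments rest on the same key fact, namely that ``survival in $B$ without ever visiting $A$'' only sees the laws $\{\mu_z\}_{z\notin A}$.
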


\begin{proof}
\begin{enumerate}[a)]
 \item  If ${\mathbf{q}}_0(\cdot,A) \le \mathbf{z}(\cdot)$ then, by equation~\eqref{eq:extprobab}, ${\mathbf{q}}_n(\cdot,A) \le \mathbf{z}(\cdot)$ for all $n \in \N$,
 whence $\mathbf{q}(\cdot, A)=\lim_{n \to +\infty} \mathbf{q}_n(\cdot,A) \le \mathbf{z}(\cdot)$.
 Conversely, if ${\mathbf{q}}(\cdot,A) \le \mathbf{z}(\cdot)$ then, by the monotonicity of $\{{\mathbf{q}}_n(\cdot,A)\}_{n \in \N}$  we have
 ${\mathbf{q}_0}(\cdot,A) \le \mathbf{z}(\cdot)$.
 \item The equality ${\mathbf{q}^\mu_0}(x, A) = {\mathbf{q}^\nu_0}(x, A)$ is trivial when $x \in A$ and, when $x \not \in A$,
 it follows from the fact that the behavior of the two BRWs is the same until they first hit the set $A$. 
 From the previous part of the theorem, by taking $\mathbf{z}=\mathbf{q}^\mu(\cdot,B)$ and $\mathbf{z}=\mathbf{q}^\nu(\cdot,B)$ we have that  
 \[
  \begin{split}
 {\mathbf{q}^\mu}(x, A) = \mathbf{q}^\mu(x,B), \   \forall x \in X, & \Longleftrightarrow {\mathbf{q}^\mu_0}(x, A) \le \mathbf{q}^\mu(x,B),  \forall x \in X;  \\
 {\mathbf{q}^\nu}(x, A) = \mathbf{q}^\nu(x,B), \   \forall x \in X, &\Longleftrightarrow {\mathbf{q}^\mu_0}(x, A) \le \mathbf{q}^\nu(x,B),  \forall x \in X.\\
  \end{split}
 \]
  If, for some $x \in X$, $  {\mathbf{q}^\mu}(x, A) \not = \mathbf{q}^\mu(x,B)$ then, since $A \subseteq B$, $  {\mathbf{q}^\mu}(x, A) > \mathbf{q}^\mu(x,B)$; thus,
  from the previous part of the theorem, with positive probability $(X, \mu)$ survives in $B$ without ever visiting $A$ (starting from a suitable 
  $y \in B\setminus A$). Thus, the same holds for $(X,\nu)$
  (because their behavior is the same until they first hit $A$),
  thus, ${\mathbf{q}^\nu}(x, A) > \mathbf{q}^\nu(x,B)$ for some $x \in X$.  By switching, now, the roles of $(X, \mu)$ and $(X, \nu)$, the equivalence follows.
\end{enumerate}
\end{proof}


%

From the previous theorem, we have the following dichotomy: for every sets $A, B  \subseteq X$, either 
${\mathbf{q}}(\cdot,A) \le {\mathbf{q}}(\cdot, B)$ or 
there is $x \in B\setminus A$ such that there is a positive probability of local survival in $B$ starting from $x$ without ever visiting $A$.
In particular ${\mathbf{q}}(x,A)>{\mathbf{q}}(x, B)$ implies that there is
a positive probability of local survival in $B$ and
local extinction in $A$ starting from $x$ (if $A \subseteq B$ then also the converse is true).
Note that, ${\mathbf{q}}_0(x,A)>{\mathbf{q}}(x, B)$ implies ${\mathbf{q}}(x,A)>{\mathbf{q}}(x,B)$ but 
the converse is not true.
The second tool that we need is the following lemma.

\begin{lem}\label{lem:smallersurvival}
 Consider a BRW $(X,\mu)$ and three subsets $A_1, A_2 \subseteq A \subseteq X$ such that $A_1 \cap A_2 =\emptyset$. 
If there exists $z \in X$ such that $\pr^z(\sum_{x \in A_1} \eta_n(x) >0 \  i.o., \lim_{n \to +\infty} \sum_{x \in A_2} \eta_n(x)=0)>0$ then  
$\mathbf{q}(z,A_2) > \mathbf{q}(z, A)$, whence $\mathbf{q}(\cdot,A_2) > \mathbf{q}(\cdot, A)$.

 In particular, if  $x$ is such that $x \to z$ then $\mathbf{q}(x,A_2) > \mathbf{q}(x,A)$ provided that $\bar{\mathbf{q}}(y)>0$
 whenever $x \to y$ (for instance, if $\mu_y(\mathbf{0})>0$ for all $y$).
\end{lem}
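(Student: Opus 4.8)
The plan is to write the gap $\mathbf q(z,A_2)-\mathbf q(z,A)$ as the probability of a single event and bound it below by the hypothesis. By Definition~\ref{def:survival}, $\mathbf q(z,A)=\pr^{z}(E_A)$, where $E_A:=\{\lim_{n\to\infty}\sum_{y\in A}\eta_n(y)=0\}$ is the event of local extinction in $A$ (the ``$\limsup$'' and ``$\lim$'' formulations coincide here because the sums are $\N$-valued), and likewise $\mathbf q(z,A_2)=\pr^{z}(E_{A_2})$. Since $A_2\subseteq A$, local extinction in $A$ forces local extinction in $A_2$, so $E_A\subseteq E_{A_2}$ and therefore $\mathbf q(z,A_2)-\mathbf q(z,A)=\pr^{z}(E_{A_2}\setminus E_A)=\pr^{z}(E_{A_2}\cap E_A^{\,c})$, where $E_A^{\,c}$ is the event of local survival in $A$.

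Next I would check that the event in the hypothesis is contained in $E_{A_2}\cap E_A^{\,c}$: the part $\{\sum_{x\in A_1}\eta_n(x)>0\ \text{i.o.}\}$ is local survival in $A_1$, and since $A_1\subseteq A$ this entails local survival in $A$, i.e.\ it lies in $E_A^{\,c}$; the part $\{\lim_n\sum_{x\in A_2}\eta_n(x)=0\}$ is precisely $E_{A_2}$. Hence $\mathbf q(z,A_2)-\mathbf q(z,A)\ge\pr^{z}(\text{hypothesis event})>0$, giving $\mathbf q(z,A_2)>\mathbf q(z,A)$; combined with the monotonicity $\mathbf q(\cdot,A_2)\ge\mathbf q(\cdot,A)$ noted just before Remark~\ref{rem:q0} (valid because $A_2\subseteq A$), this yields $\mathbf q(\cdot,A_2)>\mathbf q(\cdot,A)$ as vectors.

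For the final ``in particular'' assertion, fix $x$ with $x\to z$ and pick $n$ with $x\stackrel{n}{\to}z$ along an edge-path in $(X,E_\mu)$. Following one particle down this path gives $\pr^{x}(\eta_n(z)\ge1)>0$, so there is a configuration $\xi\in S_X$ with $\xi(z)\ge1$ and $\pr^{x}(\eta_n=\xi)>0$; moreover every site $y$ with $\xi(y)>0$ satisfies $x\to y$, hence $\bar{\mathbf q}(y)>0$ by hypothesis, and there are only finitely many such $y$. Now I would use the Markov property at time $n$ together with the branching property: conditionally on $\eta_n=\xi$, the process is the superposition of independent BRWs, one issued from each of the $\sum_y\xi(y)$ particles. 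Assign to one particle sitting at $z$ the event $\{\text{local survival in }A_1,\ \text{local extinction in }A_2\}$ (of probability $\ge\pr^{z}(\text{hypothesis event})>0$, by the first part) and to each of the remaining particles the event of global extinction (of probability $\bar{\mathbf q}(y)>0$ for the particle at $y$). On the intersection of these independent events there is local survival in $A_1\subseteq A$ and, since global extinction implies local extinction in $A_2$, local extinction in $A_2$ for the whole population; so $E_{A_2}\cap E_A^{\,c}$ occurs, and $\pr^{x}(E_{A_2}\cap E_A^{\,c})\ge\pr^{x}(\eta_n=\xi)\cdot\pr^{z}(\text{hyp.})\cdot\prod\bar{\mathbf q}(y)>0$, the last product running over the $\sum_y\xi(y)-1$ spectator particles. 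As in the first part this gives $\mathbf q(x,A_2)>\mathbf q(x,A)$.

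The event manipulations and the path-following lower bound are routine; the step that needs care is the last one's decomposition of the post-time-$n$ evolution into independent single-particle BRWs and the bookkeeping of which sub-event is assigned to which particle. In particular one must note that all sites occupied at time $n$ are reachable from $x$ — so that $\bar{\mathbf q}>0$ there and the spectator particles can be forced to die out — and that killing these spectators does not destroy the local survival in $A$ produced by the designated particle, which is immediate from independence. No irreducibility is needed; the hypothesis $\bar{\mathbf q}(y)>0$ whenever $x\to y$ enters exactly to make the spectator subtrees extinguish with positive probability (and $\mu_y(\mathbf 0)>0$ for all $y$ is clearly a sufficient condition for it).
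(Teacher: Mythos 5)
Your proposal is correct and follows essentially the same route as the paper: the first part is the same observation that the hypothesis event is contained in $\{$local survival in $A\} \cap \{$local extinction in $A_2\}$, whose probability is exactly $\mathbf{q}(z,A_2)-\mathbf{q}(z,A)$, and the second part is the same ``reach $z$ along a path and kill all spectator progenies'' argument, which you merely spell out in more detail (explicit conditioning on the time-$n$ configuration and the branching decomposition) than the paper does.
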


\begin{proof}
 From the inclusion $A_i \subseteq A$ we have $\mathbf{q} (\cdot, A_i) \ge \mathbf{q}(\cdot, A)$ (for $i=1,2$). From the hypotheses we have
 \[
 \pr^z(\sum_{x \in A} \eta_i(x) >0 \  i.o., \lim_{i \to +\infty} \sum_{x \in A_2} \eta_i(x)=0) \ge 
 \pr^z(\sum_{x \in A_1} \eta_n(x) >0 \  i.o., \lim_{n \to +\infty} \sum_{x \in A_2} \eta_n(x)=0) >0 
 \]
 whence $\mathbf{q}(z,A_2) > \mathbf{q}(z,A)$ and this implies $\mathbf{q}(\cdot, A_2) > \mathbf{q}(\cdot, A)$.

If $x \to z$, then there is a positive probability $p_0$ that the process can reach $z$ and that the progenies
of all the particles, except at most one at $z$, die out; in this case the long term behavior is given by the evolution
of the progeny of one particle at $z$.
Thus $\pr^x(\sum_{y \in A_1} \eta_i(y) >0 \  i.o., \lim_{i \to +\infty} \sum_{y \in A_2} \eta_i(y)=0) \ge 
p_0 \pr^z(\sum_{y \in A_1} \eta_n(A_1) >0 \  i.o., \lim_{n \to +\infty} \sum_{y \in A_2} \eta_n(y)=0) >0$.
As before, this implies $\mathbf{q}(x,A_2) > \mathbf{q}(x,A)$.
\end{proof}


\section{BRWs with uncountably many extinction probabilities}\label{sec:main}

%

\subsection{A BRW with an uncountable set of extinction probability vectors: the tree} \label{exmp:tree}
 Consider the regular tree $\mathbb{T}_m$ (where $m \ge 3$) 
 and the discrete-time counterpart of a continuous-time BRW where $K$ is the adjacency matrix on $\mathbb{T}_m$; 
 for this BRW it is well known that $\lambda_w=1/m<1/2\sqrt{m-1}= \lambda_s$.
 Denote a vertex by $o$ and call it the root. Given $x \neq o$ we denote by $T_x$ the subtree branching from $x$, that is, the set of vertices 
 which are disconnected from $o$ by removing $x$ from $\mathbb{T}_m$; moreover, let $T_o:=\mathbb{T}_m$.
  Given any automorphism $\Psi$  of $\mathbb{T}_m$ (that is, a bijective map preserving the edges), 
 one can easily prove that $\mathbf{q}(\cdot,A)=\mathbf{q}(\Psi(\cdot),\Psi(A))$;
 in particular if $\Psi(o)=o$ and $\Psi(x)=y$ then $\Psi(T_x)=\Psi(T_y)$, thus
$\mathbf{q}(\cdot, T_x)=\mathbf{q}(\Psi(\cdot), T_y)$.

If $\lambda \le \lambda_w$ then there is only one fixed point, namely $\mathbf{z}=\mathbf{1}$;
if $\lambda > \lambda_s$ then there are just two extinction probability vectors, $\mathbf{q}(\cdot, \mathbb{T}_3)$ and $\mathbf{1}$, indeed in this case 
$\mathbf{q}(\cdot, A)=\mathbf{q}(\cdot, \mathbb{T}_m)$ for all $A \not = \emptyset$ (see \cite[Corollary 3.2]{cf:BZ14-SLS} and \cite[Example 4.5]{cf:BZ2017}).
 The last case $\lambda \in (\lambda_w, \lambda_s]$ is the most interesting one:  
 $\mathbf{q}(\cdot, \mathbb{T}_m)<\mathbf{1}$ while $\mathbf{q}(\cdot, A)=\mathbf{1}$ for every finite $A \subset \mathbb{T}_3$.
 We prove now that $x \mapsto \mathbf{q}(\cdot, T_x)$ is injective, thus $F_G$ is at least countable.

 Henceforth, for simplicity we consider just the case $m=3$ although an analogous construction can be done 
on $\mathbb{T}_m$ for all $m \ge 3$; indeed, a more general example is sketched in Section~\ref{subsec:comb} (see Theorem~\ref{thm:comb}). 
Let $d$ be the natural distance on the graph $\mathbb{T}_3$ and consider a sequence $\{y_n\}_{n \in \Z}$ of distinct vertices
such that $y_n$ is a neighbor of $y_{n+1}$ (for all $n \in \Z$) and $d(o, y_n)=|n|$ (clearly $y_0=o$).
We denote by $x_i$ the third neighbor of $y_{i-1}$ (outside $y_{i-2}$ and $y_i$).
A graphical representation is depicted in Figure~1. 
We note that $d(o,x_n)=n$ for all $n \in \N$ and $T_{x_n} \cap T_{x_m}=\emptyset$ whenever $n \neq m$.

\begin{center}
\begin{figure}\label{fig:t3tree}
 \includegraphics[width=1 \textwidth]{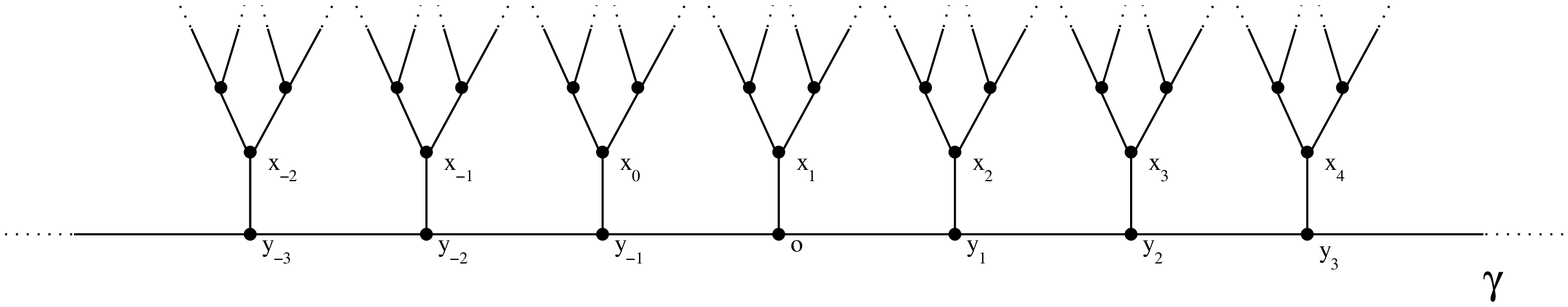}
 \caption{The tree $\mathbb{T}_3$}
\end{figure}
 \end{center}


\begin{lem}\label{lem:countable}
 Let $\lambda \in (\lambda_w, \lambda_s]$. For every $n \ge 1$ and $x \in \mathbb{T}_3$.
 \[
  \mathbf{q}(x, \mathbb{T}_3)=\mathbf{q}(x, T_{y_0}) < \mathbf{q}(x, T_{y_n}) < \mathbf{q}(x, T_{y_{n+1}})< 1.
 \]
\end{lem}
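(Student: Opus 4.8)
The plan is to prove the three links of the displayed chain separately. The leftmost equality $\mathbf q(x,\mathbb T_3)=\mathbf q(x,T_{y_0})$ is merely the convention $T_{y_0}=T_o:=\mathbb T_3$, so nothing is needed there. For the two strict inequalities I will use Lemma~\ref{lem:smallersurvival}, and for $\mathbf q(\cdot,T_{y_{n+1}})<\mathbf 1$ a short global‑survival argument; both rely on one structural fact. Note first that, for $k\ge 1$ and $n\ge 1$, the subtrees $T_{x_k}$ and $T_{y_{n+1}}$ are all isomorphic, as graphs, to the rooted tree $\mathcal T$ whose root has degree $2$ and all of whose other vertices have degree $3$, and that the root is the unique vertex of $\mathcal T$ adjacent to a vertex outside it. The edge‑breeding BRW on $\mathcal T$ has the \emph{same} critical parameters as on $\mathbb T_3$, i.e.\ $\lambda_w(\mathcal T)=1/3$ and $\lambda_s(\mathcal T)=1/(2\sqrt 2)$: by the walk‑counting characterizations of $\lambda_w$ and $\lambda_s$ (see \cite{cf:BCZ,cf:BZ,cf:BZ2,cf:BZ4}) these are the reciprocals of the exponential growth rates of, respectively, the number of length‑$j$ walks and of length‑$j$ closed walks from the root, and deleting one edge at the root changes neither rate (this is also immediate from a one‑line generating‑function computation). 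I also record that the BRW is irreducible, so $x\to z$ for all $x,z$, and that $\mu_y(\mathbf 0)=1/(1+3\lambda)>0$ for every $y$, so the proviso of the ``in particular'' clause of Lemma~\ref{lem:smallersurvival} always holds.

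For the two strict inequalities I apply Lemma~\ref{lem:smallersurvival} with $(A,A_1,A_2,z)$ equal to $(\mathbb T_3,\,T_{x_1},\,T_{y_n},\,x_1)$ and to $(T_{y_n},\,T_{x_{n+1}},\,T_{y_{n+1}},\,x_{n+1})$, respectively. In both cases $A_1,A_2\subseteq A$ is clear from the positions of the vertices, $A_1\cap A_2=\emptyset$ (the two subtrees lie in different components once the separating vertex $o$, resp.\ $y_n$, is removed), and $z$ is the unique exit vertex of $S:=A_1=T_{x_k}$. It then suffices to show $\pr^{z}\big(\sum_{w\in A_1}\eta_j(w)>0\ \text{i.o.},\ \lim_j\sum_{w\in A_2}\eta_j(w)=0\big)>0$, and I obtain this from the event that the process started at $z$ confines itself to $S$ forever and survives there. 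Let $\xi$ be the BRW in which every offspring placed outside $S$ is deleted, coupled so that $\xi_j\le\eta_j$ on $S$; then $\xi$ is the edge‑breeding BRW on $S\cong\mathcal T$. Since $\lambda>1/3=\lambda_w(S)$, the event $\mathcal C':=\{\xi\text{ survives globally}\}$ has positive probability; since $\lambda\le 1/(2\sqrt2)=\lambda_s(S)$, $\xi$ gets extinct at $z$, so a.s.\ (in particular on $\mathcal C'$) only finitely many particles of $\xi$ ever sit at $z$. Conditionally on the evolution of $\xi$ and on the lifetimes of those particles, the number of offspring each of them sends out of $S$ is an independent Poisson$(\lambda\tau)$ variable ($\tau$ the lifetime), so the event $\mathcal C$ that all of them send out none has positive conditional probability $e^{-\lambda\sum\tau}$; as the sum is a.s.\ finite on $\mathcal C'$, we get $\pr^{z}(\mathcal C\cap\mathcal C')>0$. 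On $\mathcal C\cap\mathcal C'$ the whole population coincides with $\xi$, hence stays in $S=A_1$ and is never empty (so $A_1$ is hit i.o.), while $A_2$, disjoint from $S$, is never hit — exactly the required event. Lemma~\ref{lem:smallersurvival} now gives $\mathbf q(z,A_2)>\mathbf q(z,A)$ and, via its ``in particular'' clause ($x\to z$ and $\mu_\cdot(\mathbf 0)>0$), $\mathbf q(x,A_2)>\mathbf q(x,A)$ for all $x$; that is, $\mathbf q(x,T_{y_0})<\mathbf q(x,T_{y_n})$ and $\mathbf q(x,T_{y_n})<\mathbf q(x,T_{y_{n+1}})$.

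Finally $\mathbf q(\cdot,T_{y_{n+1}})<\mathbf 1$: the process $\xi$ restricted to $T_{y_{n+1}}\cong\mathcal T$ is the edge‑breeding BRW on $\mathcal T$, which survives globally because $\lambda>1/3=\lambda_w(\mathcal T)$; dominating it by the full process gives $\mathbf q(y_{n+1},T_{y_{n+1}})<1$, and irreducibility then forces $\mathbf q(x,T_{y_{n+1}})<1$ for all $x$. (One could also argue by symmetry: started at a neighbour of $y_{n+1}$ the process survives globally with positive probability and visits the finite set $\{y_{n+1}\}$ only finitely often, hence visits one of the two subtrees hanging at $y_{n+1}$ infinitely often, and the automorphism exchanging those two subtrees yields $\mathbf q(\cdot,T_{y_{n+1}})<\mathbf 1$.) Combining the three steps gives the displayed chain for every $n\ge 1$ and $x\in\mathbb T_3$.

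The step I expect to be the real obstacle is the structural input $\lambda_w(\mathcal T)=\lambda_w(\mathbb T_3)=1/3$ (and likewise $\lambda_s(\mathcal T)=\lambda_s(\mathbb T_3)$): one must know that truncating $\mathbb T_3$ to a half‑tree does not raise the global‑survival threshold. The cheap comparison with finite balls only delivers $\lambda_w(\mathcal T)\le\lambda_s(\mathbb T_3)$, which is useless in the window $\lambda\in(\lambda_w,\lambda_s]$; it is the exact value $1/3$ that makes the restricted BRW on $T_{x_k}$ and on $T_{y_{n+1}}$ supercritical there. Everything else — the choice of the sets in Lemma~\ref{lem:smallersurvival} and the ``no particle escapes $S$'' estimate — is routine once that input is in hand.
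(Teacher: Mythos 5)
Your overall architecture is the same as the paper's: both strict inequalities are reduced to Lemma~\ref{lem:smallersurvival} with $A_1$ the third subtree hanging off the separating vertex and $A_2$ the next subtree along the line, and the entire burden is to exhibit a positive-probability event on which the process survives while confined to a half-tree $T_{x_k}$. The gap is exactly at the step you flag as the obstacle: you justify $\lambda_w(\mathcal{T})=1/3$ for the rooted half-tree $\mathcal{T}$ by appeal to a ``walk-counting characterization of $\lambda_w$''. No such characterization exists. The growth rate $\limsup_n(\sum_y(K^n)_{xy})^{1/n}$ of the number of length-$n$ walks controls the \emph{expected} population, so the first-moment argument yields only $\lambda_w(\mathcal{T})\ge 1/3$ --- the direction you do not need. (The closed-walk formula for $\lambda_s$ \emph{is} a theorem, so your claim $\lambda_s(\mathcal{T})=1/(2\sqrt2)$ and the ensuing local-extinction step are fine; but for $\lambda_w$ the first-moment bound is not sharp in general, and the projection-onto-a-branching-process argument that gives $\lambda_w(\mathbb{T}_3)=1/3$ breaks on $\mathcal{T}$ because the root has total breeding rate $2$ rather than $3$.) What you actually need, $\lambda_w(\mathcal{T})\le 1/3$, i.e., supercriticality of the BRW restricted to a half-tree for every $\lambda>1/3$, is true but requires a genuine argument (e.g.\ a second-moment computation on $\mathcal{T}$); it is not ``immediate from a one-line generating-function computation''.

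The paper sidesteps this with a soft argument you already have available. Since $\lambda\le\lambda_s$ gives $\mathbf{q}(\cdot,\{y_n\})=\mathbf{1}$ while $\lambda>\lambda_w$ gives $\mathbf{q}(\cdot,\mathbb{T}_3)<\mathbf{1}$, condition (3) of Theorem~\ref{th:strongconditioned}(a) fails for $A=\{y_n\}$, $B=\mathbb{T}_3$, and the dichotomy stated right after that theorem produces a vertex $w$ from which the process survives globally with positive probability without ever visiting $y_n$; by the rotational symmetry of $\mathbb{T}_3$ at $y_n$, $w$ may be taken in $T_{x_{n+1}}$, and on the event ``never visit $y_n$'' the population is automatically confined there. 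This replaces both your computation of $\lambda_w(\mathcal{T})$ and your (correct but then unnecessary) conditioning construction in which the finitely many particles ever sitting at the exit vertex are required to send no offspring outside. Your treatment of the remaining links --- the convention $T_{y_0}=\mathbb{T}_3$ and the deduction of $\mathbf{q}(\cdot,T_{y_{n+1}})<\mathbf{1}$ from survival inside a sub-half-tree plus irreducibility --- is correct once the confinement event is secured.
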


\begin{proof}
Denote by $A$ the subtree $T_{y_n}$, by $A_2$ the subtree $T_{y_{n+1}}$ and by $A_1$ the set $A \setminus (A_2 \cup \{y_n\})$.
Since $\mathbf{q}(\cdot, \mathbb{T}_3)<1$ and $\mathbf{q}(\cdot, \{y_n\})=1$ then, 
by Theorem~\ref{th:strongconditioned} (if we take $B:=\{y_n\}$ then (3) fails), there exists
$w \in \mathbb{T}_3$ such that the process starting from $w$ survives with positive probability without ever visiting $y_n$;
by rotational symmetry centered in $y_n$, $w$ can be chosen in $A_1$. 
If a process starts in $A_1$ and never visits $y_n$ it is then confined to $A_1$ whence, by
Lemma~\ref{lem:smallersurvival}, $\mathbf{q}(\cdot, T_{y_{n+1}}) > \mathbf{q}(\cdot, T_{y_{n}})$ and the strict inequality holds for every coordinate.
\end{proof}

Observe that, given $x,z \in  \mathbb{T}_3$ such that 
$z \not \in T_x$, then $\mathbf{q}(z, T_x)$ depends only on $d(z,x)$. Indeed,  there exists an automorphism $\Psi$ such that 
$\Psi(z)=o$ and $\Psi(x)=y_{d(z,x)}$. Thus, $\Psi(T_x)=T_{y_{d(z,x)}}$, hence $\mathbf{q}(z,T_x)=\mathbf{q}(\Psi(z),\Psi(T_x)) = \mathbf{q}(o, T_{y_{d(z,x)}})$.
In particular, if $x,w \in  \mathbb{T}_3$ are such that $d(o,x) < d(o,w)$ 
then $\mathbf{q}(o, T_x)=\mathbf{q}(o, T_{y_{d(o,x)}}) < \mathbf{q}(o, T_{y_{d(o,w)}})=\mathbf{q}(o, T_w)$
(the case $d(o,x) > d(o,w)$ is analogous).
If $d(o,x)=d(o,w)$ then $d(x,w)>0$ and 
it is easy to show that 
$\mathbf{q}(x, T_x) < \mathbf{q}(o, T_{y_{d(x,w)}})=\mathbf{q}(x, T_w)$.

Now we prove the local extinction on a bi-infinite line.

%

\begin{lem}\label{lem:extinctionline}
Let $\lambda \in (\lambda_w, \lambda_s]$. If $\gamma$ is a bi-infinite line in $\mathbb{T}_3$ then
$\mathbf{q}(x,\gamma)=1$.
\end{lem}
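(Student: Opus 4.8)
The plan is to prove $\mathbf{q}(o,\gamma)=1$ for the fixed vertex $o$ on $\gamma$; by irreducibility (the remark following Definition~\ref{def:survival}) this already forces $\mathbf{q}(\cdot,\gamma)=\mathbf{1}$. Write $\gamma=\{z_n\}_{n\in\Z}$ with $z_n\sim z_{n+1}$ and $z_0=o$, and let $w_n$ be the neighbour of $z_n$ not on $\gamma$, so that the subtree $T_{w_n}$ is the ``bush'' hanging off $z_n$ and $\mathbb{T}_3=\gamma\cup\bigsqcup_n T_{w_n}$. The first step is to observe that local survival in $\gamma$ starting from $z_0$ is equivalent to \emph{global} survival of the BRW \emph{traced on $\gamma$}: follow each particle of $(\mathbb{T}_3,\mu)$ until a descendant first returns to $\gamma$ and record the positions of those descendants. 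Since the original BRW is branching and Markovian this defines a genuine BRW $(\gamma,\tilde\mu)$, and $(\mathbb{T}_3,\mu)$ visits $\gamma$ infinitely often with positive probability if and only if $(\gamma,\tilde\mu)$ survives globally from $z_0$. (The offspring numbers of $(\gamma,\tilde\mu)$ are a.s.\ finite: in continuous time each particle has finitely many children, and since $\lambda\le\lambda_s\le\lambda_s(T_{w_n})$ the BRW confined to any bush $T_{w_n}$ visits its root $w_n$, hence sends particles back to $\gamma$, only finitely often a.s.)

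The second step is to identify $(\gamma,\tilde\mu)$. Because $\mathbb{T}_3$ is a tree, any path between $z_n$ and $z_m$ with $|n-m|\ge 2$ passes through an intermediate vertex of $\gamma$, so $(\gamma,\tilde\mu)$ is \emph{nearest neighbour} on $\gamma\cong\Z$. Using the automorphism of $\mathbb{T}_3$ that translates along the bi-infinite geodesic $\gamma$ (an edge-preserving bijection, hence reproduction-preserving) we get that $\tilde\mu$ is \emph{translation invariant}; using the automorphism $\Psi$ with $\Psi(z_n)=z_{-n}$, which fixes $z_0$, we get that $\tilde\mu$ is \emph{symmetric} about $z_0$, so in particular the mean number of children a site sends to its left neighbour equals the mean number it sends to its right neighbour. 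Thus $(\gamma,\tilde\mu)$ is a homogeneous, nearest-neighbour, drift-free BRW on $\Z$ with finite mean matrix.

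The third step is the dichotomy on $\Z$. Being translation invariant, $(\gamma,\tilde\mu)$ projects on a branching process, so it survives globally iff its mean offspring number $\bar\rho=a+b+c$ exceeds $1$, where $a,c$ are the mean numbers of children a site sends to its two neighbours and $b$ the mean number it keeps at the site itself; by symmetry $a=c$, so $\bar\rho=2a+b$, and this is exactly the spectral radius on $\ell^2(\Z)$ of the (translation invariant, symmetric) mean matrix $\tilde m$ of $(\gamma,\tilde\mu)$, i.e.\ $\bar\rho=\limsup_n(\tilde m^{(n)}_{z_0z_0})^{1/n}$. On the other hand, for any BRW there is local survival at a site precisely when this exponential growth rate exceeds $1$ (with local extinction at the critical value --- the same fact already invoked for the edge-breeding BRW on $\mathbb{T}_3$, see the cited characterizations of $\lambda_s$). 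Hence $(\gamma,\tilde\mu)$ survives globally iff it survives locally at $z_0$. Now assemble: if $\mathbf{q}(o,\gamma)<1$, then $(\gamma,\tilde\mu)$ survives globally, hence locally at $z_0=o$, hence $(\mathbb{T}_3,\mu)$ visits $o$ infinitely often with positive probability, i.e.\ $\mathbf{q}(o,o)<1$; but $\lambda\le\lambda_s$ forces $\mathbf{q}(o,o)=1$, a contradiction. Therefore $\mathbf{q}(o,\gamma)=1$, and by irreducibility $\mathbf{q}(x,\gamma)=1$ for every $x\in\mathbb{T}_3$.

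The main obstacle is the construction and control of the traced BRW $(\gamma,\tilde\mu)$: one must verify that its offspring laws are well defined (a.s.\ finite) and have finite means --- the delicate point being the critical case $\lambda=\lambda_s$, where finiteness of the mean self-loop weight uses that the Green function of the edge-breeding BRW on the binary subtree $T_{w_n}$ still takes a finite, square-root--type value at its radius of convergence --- and one must transfer ``visits $\gamma$ (resp.\ $z_0$) infinitely often'' correctly between $(\mathbb{T}_3,\mu)$ and $(\gamma,\tilde\mu)$. The remaining ingredient, that a symmetric translation-invariant nearest-neighbour BRW on $\Z$ cannot survive globally without surviving locally (the amenability/no-drift coincidence $\lambda_w=\lambda_s$ on $\Z$), is standard and can be quoted from the literature on $\lambda_w,\lambda_s$ cited above.
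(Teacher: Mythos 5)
Your proof is correct, but it follows a genuinely different route from the paper's. The paper reduces to the half-line $\{y_n\}_{n\in\N}$, uses a.s.\ extinction in finite sets to get $\mathbf{q}(\cdot,\gamma)\ge\lim_{n}\mathbf{q}(\cdot,T_{y_n})$, rewrites $\mathbf{q}(o,T_{y_n})=\mathbf{q}(y_{n-1},T_{x_1})$ via an automorphism, and then quotes a result of Moyal (together with transitivity and the failure of strong local survival) to conclude that this quantity tends to $1$. You instead trace the process on $\gamma$, obtaining a translation-invariant, symmetric, nearest-neighbour-with-self-loop BRW on $\Z$, and exploit the amenability coincidence there: global survival of the traced process forces $\bar\rho=2a+b>1$, which equals the local growth rate $\limsup_n(\tilde m^{(n)}_{z_0z_0})^{1/n}$, hence local survival at $o$, contradicting $\lambda\le\lambda_s$. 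The paper's argument is shorter but imports Moyal's asymptotic; yours stays inside the projection/spectral toolkit the paper already deploys (Definition~\ref{def:projection} and the characterization of $\lambda_s$), at the cost of the technical checks you rightly flag: a.s.\ finiteness of the traced offspring (which holds because the BRW confined to a bush has local growth rate at most that of $\mathbb{T}_3$, so it dies out locally at $w_n$ even at $\lambda=\lambda_s$, and each generation is a.s.\ finite, so only finitely many particles ever return to $z_n$), and the equivalences between traced global (resp.\ local) survival and infinitely many visits to $\gamma$ (resp.\ $o$), which again rest on the a.s.\ finiteness of each generation. Two small additions would make it airtight: finiteness of the self-loop mean $b$ is not actually needed (if $b=\infty$ the local growth rate is infinite and local survival follows directly from the spectral characterization), and when $\bar\rho\le1$ one should note that the projected offspring law is non-degenerate, so the critical projected branching process does die out.
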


\begin{proof}
 It is enough to prove that $\mathbf{q}(x,\gamma)=1$ when 
 $\gamma:=\{y_n\}_{n \in \N}$.
 Since there is a.s.~local extinction then $\mathbf{q}(\cdot,\gamma)\ge \lim_{n \to \infty}\mathbf{q}(\cdot, T_{y_n})$.  Now,
by using a suitable automorphism, 
then $\mathbf{q}(o,T_{y_n})=\mathbf{q}(y_{n-1}, T_{x_1})$ for all $n >0$. 
Since $\mathbf{q}(\cdot, T_{x_1}) \neq \mathbf{q}(\cdot, X)$ then a result of Moyal 
(see \cite{cf:Moyal}) and the transitivity of $\mathbb{T}_3$
imply that $\lim_{n \to \infty}\mathbf{q}(y_{n-1}, T_{x_1})=1$ whence $\mathbf{q}(o,\gamma)=1$; the irreducibility yields
$\mathbf{q}(x,\gamma)=1$ for all $x \in \mathbb{T}_3$.
\end{proof}

There are two interesting consequences of the previous lemma.
\begin{enumerate}
 \item Any surviving population leaves a.s.~every bi-infinite line $\gamma$.
 \item Since $\mathbf{q}(\cdot, \gamma)=\mathbf{1}$ then the population visits a.s.~a finite number of vertices $\{y_n\}_{n \in \N}$, hence survival occurs
in a finite number of subtrees $\{T_{x_n}\}_{n \in \Z}$ 
(this argument can be repeated inside each subtree and so on). Thus, for all $I \subseteq \N \setminus \{0\}$ we have
$\mathbf{q}(\cdot, \bigcup_{i \in I\colon i \le n} T_{x_i}) \downarrow \mathbf{q}(\cdot, \bigcup_{i \in I} T_{x_i})$
as $n \to +\infty$.
\end{enumerate}

%

By Lemma~\ref{lem:countable} we have at least a countable collection of distinct extinction probability vectors. The following 
theorem proves the existence of an uncountable collection.

\begin{teo}\label{thm:uncountableEPVs}
 Let $\lambda \in (\lambda_w, \lambda_s]$. If $I_1, I_2 \subseteq \N \setminus\{0\}$ such that
 $\sum_{n \in I_1} 2^{-i} \neq \sum_{n \in I_2} 2^{-i}$ then 
 $\mathbf{q}(\cdot, \bigcup_{i \in I_1} T_{x_i}) \neq \mathbf{q}(\cdot, \bigcup_{i \in I_2} T_{x_i})$.
\end{teo}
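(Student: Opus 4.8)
The plan is to reduce the statement to a one-dimensional (in the index $n$) problem and then exploit the fact, already established, that $\mathbf{q}(o,T_{x_n})$ is strictly increasing in $n$, together with a ``matching from the top'' argument. Write $A_1=\bigcup_{i\in I_1}T_{x_i}$ and $A_2=\bigcup_{i\in I_2}T_{x_i}$, and assume WLOG that $\sum_{n\in I_1}2^{-n}<\sum_{n\in I_2}2^{-n}$; in particular there is a smallest index $k$ at which $I_1$ and $I_2$ differ, and necessarily $k\in I_2\setminus I_1$. Since the subtrees $T_{x_i}$ are pairwise disjoint, a surviving population visits only finitely many of them (consequence~(2) of Lemma~\ref{lem:extinctionline}), so $\mathbf{q}(\cdot,\bigcup_{i\in I:i\le n}T_{x_i})\downarrow\mathbf{q}(\cdot,\bigcup_{i\in I}T_{x_i})$, which lets me work with finite unions and pass to the limit. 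The first step is therefore to set up notation and record this reduction to finite truncations.

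The heart of the argument is a comparison that isolates the single ``extra'' subtree $T_{x_k}\subseteq A_2\setminus A_1$. I would like to apply Lemma~\ref{lem:smallersurvival} with $A:=A_2$ (or a large finite truncation of it), $A_2^{\mathrm{lem}}:=A_1\cap A_2$ (the common part), and $A_1^{\mathrm{lem}}:=T_{x_k}$, which are disjoint subsets of $A_2$. For this I need a vertex $z$ from which, with positive probability, the process visits $T_{x_k}$ infinitely often while going extinct in $A_1\cap A_2$. The cleanest source of such a $z$ is a vertex inside $T_{x_k}$ far from $x_k$: once the process is confined to $T_{x_k}$ (which happens with positive probability starting from deep inside $T_{x_k}$, by the same strong-local-survival/Theorem~\ref{th:strongconditioned} reasoning used in Lemma~\ref{lem:countable}, since $\lambda>\lambda_w$ forces $\mathbf{q}(\cdot,T_{x_k})<\mathbf{1}$ but $\mathbf{q}(\cdot,\{x_k\})=\mathbf{1}$), it never touches any other $T_{x_i}$, hence never touches $A_1\cap A_2$. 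Then Lemma~\ref{lem:smallersurvival} gives $\mathbf{q}(\cdot,A_1\cap A_2)>\mathbf{q}(\cdot,A_2)$, and since $A_1\supseteq A_1\cap A_2$ we also get $\mathbf{q}(\cdot,A_1)\ge\mathbf{q}(\cdot,A_1\cap A_2)>\mathbf{q}(\cdot,A_2)$, which is exactly $\mathbf{q}(\cdot,A_1)\neq\mathbf{q}(\cdot,A_2)$.

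The step I expect to be the real obstacle is justifying that the inequality is produced specifically by the index that makes the series differ — i.e., that the asymmetry in the $2^{-n}$ weights is what guarantees $A_1\cap A_2\subsetneq A_2$ with the removed part nonempty. In the argument above I only used that $I_1\neq I_2$ and picked $k$ in the symmetric difference; but then I must be careful that $k$ can be chosen in $I_2\setminus I_1$ (not $I_1\setminus I_2$), which is where $\sum_{I_1}2^{-n}<\sum_{I_2}2^{-n}$ is used: the first differing index always belongs to the larger-sum set, since $2^{-k}>\sum_{i>k}2^{-i}$ dominates all later terms. So the weights serve only to break the symmetry and orient the inequality consistently. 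A secondary technical point is the passage from finite truncations of $A_2$ to $A_2$ itself: since $\mathbf{q}(\cdot,\cdot)$ is monotone in the set and the truncations decrease to $A_2$, a strict inequality $\mathbf{q}(\cdot,A_1\cap A_2)>\mathbf{q}(\cdot,A_2)$ obtained at the level of $T_{x_k}$ versus $A_2$ is not weakened in the limit — one gets $\mathbf{q}(z,A_1\cap A_2)\ge\mathbf{q}(z,T_{x_k})>\mathbf{q}(z,A_2)$ directly for the infinite set, because the event witnessing Lemma~\ref{lem:smallersurvival} (confinement to $T_{x_k}$) already avoids all of $A_1\cap A_2$, finite or not. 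Finally, irreducibility and $\mu_y(\mathbf{0})>0$ (the continuous-time BRW has this, via the exponential lifetimes) upgrade the strict inequality at $z$ to strict inequality at every coordinate, as in the ``in particular'' clause of Lemma~\ref{lem:smallersurvival}.
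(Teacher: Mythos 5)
Your argument breaks at the very last step, and the error is not cosmetic: the monotonicity of $\mathbf{q}$ in the set goes the other way. Since $A_1\cap A_2\subseteq A_1$, the rule ``$A\subseteq B$ implies $\mathbf{q}(\cdot,A)\ge\mathbf{q}(\cdot,B)$'' (extinction in a smaller set is easier) gives $\mathbf{q}(\cdot,A_1)\le\mathbf{q}(\cdot,A_1\cap A_2)$, not $\ge$. So from your (correct) strict inequality $\mathbf{q}(\cdot,A_1\cap A_2)>\mathbf{q}(\cdot,A_2)$ you can conclude nothing about $A_1$ versus $A_2$: both $\mathbf{q}(\cdot,A_1)$ and $\mathbf{q}(\cdot,A_2)$ lie below $\mathbf{q}(\cdot,A_1\cap A_2)$, and whenever $I_1\setminus I_2\neq\emptyset$ the same application of Lemma~\ref{lem:smallersurvival} to an index of $I_1\setminus I_2$ shows $\mathbf{q}(\cdot,A_1)<\mathbf{q}(\cdot,A_1\cap A_2)$ strictly as well. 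Your scheme therefore only separates the two vectors when $I_1\subseteq I_2$, i.e., it reproves the nested case (equation~\eqref{eq:inclusion} of the paper), not the theorem.

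A symptom of the problem is that your proof uses the hypothesis on the binary sums only ``to orient the inequality'': if it were valid, it would yield $\mathbf{q}(\cdot,\bigcup_{i\in I_1}T_{x_i})\neq\mathbf{q}(\cdot,\bigcup_{i\in I_2}T_{x_i})$ for every pair $I_1\neq I_2$, which is precisely what the sum condition is designed to avoid claiming: by equation~\eqref{eq:finiteinfinite}, the sets $\{i_0\}$ and $\{i>i_0\}$ (different sets, equal binary sums) give the \emph{same} value of $\mathbf{q}(o,\cdot)$. That ``binary carrying'' identity --- proved via an automorphism sending $x_{i_0}$ to $y_{i_0}$ together with a.s.\ extinction on the bi-infinite line (Lemma~\ref{lem:extinctionline}) --- is the key idea your proposal is missing. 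The paper sets $i_0=\min I_1\triangle I_2$ (with $i_0\in I_1\setminus I_2$, so $\sum_{i\in I_1}2^{-i}>\sum_{i\in I_2}2^{-i}$), truncates to $I_3=\{i\in I_1\colon i\le i_0\}\subseteq I_1$, expands the top index into the full tail to get $I_4\supseteq I_2$ with $\mathbf{q}(o,\bigcup_{i\in I_3}T_{x_i})=\mathbf{q}(o,\bigcup_{i\in I_4}T_{x_i})$, and then applies strict monotonicity under strict inclusion at the two ends of the chain $I_3\subseteq I_1$, $I_2\subseteq I_4$; the hypothesis on the sums guarantees that at least one of these inclusions is strict. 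Without this step the non-nested comparison cannot be closed.
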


\begin{proof}
We observe that $\mathbf{q}(o, T_{x_n})=\mathbf{q}(o, T_{y_n})$ for all $n \ge 1$.
We start by proving that for all finite $I\subseteq \N \setminus\{0\}$, if $i_0=\max I$, 
 \begin{equation}\label{eq:finiteinfinite}
    \mathbf{q}(o, \bigcup_{i \in I} T_{x_i})=
  \mathbf{q}(o, \bigcup_{i \in \bar I} T_{x_i})
 \end{equation}
where $\bar I:=\{i  \in I\colon i < i_o\} \cup \{i > i_0 \}$.
Indeed, by a simple automorphism argument (choose an automorphism $\Psi$ such that
$\Psi(o)=o$ and $\Psi(x_{i_0})=\Psi(y_{i_0})$), 
 $\mathbf{q}(o, \bigcup_{i \in I} T_{x_i})=
 \mathbf{q}(o, \bigcup_{i \in I\colon i < i_o} T_{x_i} \cup T_{y_{i_0}})$. Since
 there is a.s.~extinction in every infinite line, then survival in $T_{y_{i_0}}$ is
 equivalent to survival in $\bigcup_{i > i_0} T_{x_i}$ and this yields equation~\eqref{eq:finiteinfinite}.
 
 From Lemma~\ref{lem:smallersurvival} as in Lemma~\ref{lem:countable},
we have that for all $I, J \subseteq \N \setminus \{0\}$ 
\begin{equation}\label{eq:inclusion}
 I \subsetneqq J \Longrightarrow \mathbf{q}(o,\bigcup_{i \in I} T_{x_i}) > \mathbf{q}(o,\bigcup_{i \in J} T_{x_i}).
\end{equation}
 
 Since $I_1 \neq I_2$ we can define $i_0 :=\min I_1 \triangle I_2$; suppose, without loss of generality, that
 $i_0 \in I_1 \setminus I_2$.
 Define $I_3:=\{i \in I_1 \colon i \le i_0\}$ and $I_4:=\{i\in I_3\colon i<i_0\} \cup \{i > i_0\}$.
 Note that by equation~\eqref{eq:finiteinfinite},
 \begin{equation}\label{eq:I3I4}
  \mathbf{q}(o, \bigcup_{i \in I_3} T_{x_i})=
   \mathbf{q}(o, \bigcup_{i \in I_4} T_{x_i}).
 \end{equation}
Moreover, $I_3\subseteq I_1$ and $I_2\subseteq I_4$.
 Since 
 $\sum_{n \in I_1} 2^{-i} \ge \sum_{n \in I_3} 2^{-i} = \sum_{n \in I_4} 2^{-i} \ge \sum_{n \in I_2} 2^{-i}$
 but
 $\sum_{n \in I_1} 2^{-i} > \sum_{n \in I_2} 2^{-i}$ (remember that $i_0 \in I_1 \setminus I_2$) then
 we have just two possible cases.
 \begin{itemize}
  \item $I_2 \subsetneqq I_4$, $I_3 \subseteq I_1$. In this case, by equation~\eqref{eq:inclusion},
  \begin{equation}\label{eq:ineq1}
    \mathbf{q}(o, \bigcup_{i \in I_1} T_{x_i}) \le
  \mathbf{q}(o, \bigcup_{i \in I_3} T_{x_i})=
   \mathbf{q}(o, \bigcup_{i \in I_4} T_{x_i}) <
  \mathbf{q}(o, \bigcup_{i \in I_2} T_{x_i}).
  \end{equation}
   \item $I_2 = I_4$, $I_3 \subsetneqq I_1$.In this case, again by equation~\eqref{eq:inclusion},
  \begin{equation}\label{eq:ineq2}
    \mathbf{q}(o, \bigcup_{i \in I_1} T_{x_i}) <
  \mathbf{q}(o, \bigcup_{i \in I_3} T_{x_i})=
   \mathbf{q}(o, \bigcup_{i \in I_4} T_{x_i}) =
  \mathbf{q}(o, \bigcup_{i \in I_2} T_{x_i}).
  \end{equation}
 \end{itemize}

 \end{proof}

Note that the previous theorem contradicts what had been written in \cite[p.244]{cf:BZ14-SLS},
namely it is not true that on quasi-transitive irreducible BRWs there are at most two extinction 
probability vectors. What remains true is that either
$q(\cdot,A)=\mathbf 1$ for all finite $A$ or $q(\cdot,A)=\bar{\mathbf{q}}(\cdot)$ for all nonempty subsets $A$ (in particular if there is local survival
at $x$, then there is strong local survival at each $y\in X$). This implies that when $\bar{\mathbf{q}}<\mathbf{1}=\mathbf q (\cdot, A)$ for all finite 
subsets $A$ then in general nothing can be said about $\mathbf q (\cdot, A)$ when $A$ is infinite: in \cite[Example 3.6]{cf:BZ2017} 
$\mathbf q (\cdot, A)=\mathbf q(\cdot, X)$ for every infinite $A$, in \cite[Examples 1 and 2]{cf:BraHautHessemberg2} there are BRWs with
a finite number of extinction probability vectors corresponding to different choices of the infinite set $A$,
while in the above BRW on the tree there are uncountably many different
extinction probability vectors.

An uncountable set of of extinction probability vectors can also be found in BRWs where there is local survival as the following example shows.
\begin{exmp}\label{exmp:modifiedBRW}
Consider the BRW on the tree obtained by adding a loop at $o$. If the loop has a sufficiently large reproduction rate, the BRW has 
local survival at every vertex  (see \cite[Example 4.5]{cf:BZ2017}). It also has 
an uncountable number of extinction probability vectors. Indeed, in order to apply Lemma~\ref{lem:smallersurvival} 
to obtain Lemma~\ref{lem:countable}, we
just need to prove that there is a positive probability of surviving in $T_{y_{n+1}}$ without ever visiting $y_n$: 
this is equivalent to surviving in $B:=T_{y_{n+1}} \cup \{o\}$ without ever visiting $A:=\{y_{n}, o\}$ and 
this follows from Theorem~\ref{th:strongconditioned}. 
Moreover, in case Lemma~\ref{lem:extinctionline}
does not hold, the equality in equation~\eqref{eq:finiteinfinite} becomes an inequality which implies 
$ \mathbf{q}(o, \bigcup_{i \in I_3} T_{x_i}) \le
   \mathbf{q}(o, \bigcup_{i \in I_4} T_{x_i})$ instead of the equality in equation~\eqref{eq:I3I4}, but this does not change the conclusions in 
   equations~\eqref{eq:ineq1}~and~\eqref{eq:ineq2}.
\end{exmp}

Theorem~\ref{thm:uncountableEPVs} implies that 
the relation $\{(\sum_{i \in I}  2^{-i}, \mathbf{q}(o, \bigcup_{i \in I} T_{x_i}))\}_{I \subseteq \N \setminus \{0\}}$ 
is a well-defined,
strictly decreasing map. 
One may conjecture that $q(x,A)$ only depends on ``how large $A$ is at infinity''. To be more precise,
%
consider the simple random walk and the branching random walk (with rates 1 on each edge) on the regular tree $\mathbb{T}_m$.
Denote by $(\mathcal{M}, \nu_x)$ the measure space where $\mathcal{M}$ is the Martin boundary and $\nu_x$ is the harmonic measure related to the
random walk starting from $x \in \mathbb{T}_m$. For any $A \subseteq \mathbb{T}_m$ there is a well-defined (possibly empty) boundary $\partial A
\in \mathcal{M}$ (see \cite{cf:Woess} for details on the Martin boundary of a random walk and the associated measure).
Is the relation $\{(\nu_x(\partial A), \mathbf{q}(x, A))\}_{A \subseteq \mathbf{T}_m}$ a well-defined map? 

The answer is no, by the following argument.
It is enough to prove that $q(o,A)\neq q(o,B)$ for $A$ and $B$ such that $\nu_o(\partial A)=\nu_o(\partial B)$.
Let $A=T_{x_1}$ and let $s$ and $v$ be the two neighbors of $x_1$ which are in $A$.
Since there is local extinction, $q(o,A)=q(o,A_1\cup A_2)$ where $A_1=T_s$ and $A_2=T_v$.
Let $B=A_2\cup T_{x_2}$. We focus on survival in $A$ and in $B$: it suffices to prove that
 the probability of survival in $A$ is different from the one os survival in $B$.
If the process survives in $A_2$ then there is survival both in $A$ and in $B$. We prove that
the probability of the event $\mathcal A^*$ = surviving in $A_1$ but not in $A_2$ is different from the probability
of the event $\mathcal B^*$ = surviving in $T_{x_2}$ and not in $A_2$.
Let $C_n^z$ be the event that the original particle at $o$ has exactly $n$ descendants at $z$, whose reproduction trail
hits $z$ for the first time (roughly speaking, the reproduction trail is the path which traces the lineage, see \cite{cf:PemStac1}
for a formal definition).
By simmetry, $\Prob(C_n^{x_1})=\Prob(C_n^{y_1})$, moreover $\mathcal A^*\subseteq\bigcup_{n\ge1} C_n^{x_1}$
and $\mathcal B^*\subseteq\bigcup_{n\ge1} C_n^{y_1}$.
Again by simmetry, $\mathcal A^*\cap C_n^{x_1}$ is the event where none of the $n$ children has an infinite progenies in $A_2$,
while at least one of them has an infinite progenies in $A_1$ and its probability is equal to the probability
that, starting with $n$ particles at $o$, there is extinction in $T_{y_1}$ and survival in $A$.
Similarly, $\mathcal B^*\cap C_n^{y_1}$ is the event where none of the $n$ children has an infinite progenies in $A_2$,
while at least one of them has an infinite progenies in $T_{x_2}$ and its probability is equal to the probability
that, starting with $n$ particles at $o$, there is extinction in $T_{y_3}$ and survival in $A$.
By Lemma~\ref{lem:countable}, $q(o,T_{y_1})<q(o,T_{y_3})$ and the same inequality holds for the process starting 
with $n$ particles. Since the event ``extinction in $T_{y_1}$'' is a subset of ``extinction in $T_{y_3}$'',
it is enough to note that the event ``extinction in $T_{y_3}$ with survival both in $A$ and in $T_{y_1}$''
has a positive probability.


\subsection{A BRW with an uncountable set of extinction probability vectors: the comb} \label{subsec:comb}

In this section we sketch the proof of a generalization of Theorem~\ref{thm:uncountableEPVs}.
To this aim, consider the BRW in Figure~2 on the 2-dimensional comb $C_2$, that is, the graph on $\{(x,y) \in \Z^2\colon y \ge 0\}$ where 
$(x,y)$ and $(x_1,y_1)$ are neighbors if and only if either ``$x=x_1$ and $|y-y_1|=1$'' or ``$y=y_1=0$ and $|x-x_1|=1$''.
Let $\alpha \ge 1$ and consider the rates $k_{xy}$ as in Figure~2, that is $1$ on the horizontal neighbors, $1$ downward and $\alpha+1$ upward except when
 leaving the horizontal axis where the rate is $\alpha$. We denote by $V_i$ the vertical line from $y_i$: when $i \ge 1$ this is $T_{y_i} \setminus T_{y_{i+1}}$.
  
  The following definition of \textit{projection of a BRW} 
first appeared in \cite{cf:BZ} for multigraphs, in \cite{cf:BZ2} for continuous-time BRWs and
\cite{cf:Z1} for generic discrete-time BRWs (in these papers it was called \textit{local isomorphism}). We just need it in the 
case of a continuous-time process.
   \begin{center}
\begin{figure}\label{fig:comb3}
 \includegraphics[width=1 \textwidth]{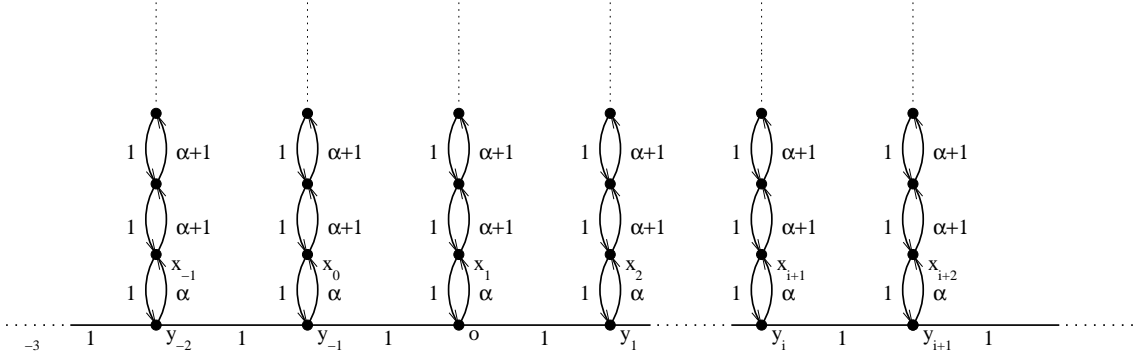}
\caption{The BRW on the comb}
\end{figure}
 \end{center}
 \begin{defn}\label{def:projection}
A projection of a BRW $(X,K)$ onto $(Y,\widetilde K)$ is a surjective map $g:X \to Y$, such that
$\sum_{z \in g^{-1}(y)} k_{xz}=\widetilde k_{g(x)y}$
for all $x \in X$ and $y \in Y$.
\end{defn}
If $\{\eta_t\}_{t \ge 0}$ is a realization of the BRW $(X, K)$, then $\{\sum_{z\in g^{-1}(\cdot)}\eta_t(z)\}_{t \ge 0}$ is a realization
of the BRW $(Y, \widetilde K)$. 
In particular it is easy to prove that $\mathbf{q}(x, g^{-1}(A))=\widetilde{\mathbf{q}}(g(x), A)$ for all $x \in X$ and $A \subseteq Y$.

To give an explicit example consider the BRW on the comb: this can be projected on a continuous-time branching process, that is
a BRW on a singleton with rate $\alpha+2$. This implies that $\lambda_w=1/(\alpha+2)$ while, by applying \cite[Proposition 4.3.3]{cf:BZ4},
$\lambda_s=1/(s \sqrt{\alpha+1})$. We note that the edge-breeding BRW on $\mathbb{T}_m$ can be projected on the BRW on the comb with $\alpha=m-2$
(one can easily understand it by comparing Figures~1 and~2).
%
%
One last example, which will be useful in the main result of the section is the BRW on the set $B$ in Figure~3: this BRW can be projected on the BRW on $V_i$
by a map $g: B \to V_i$ where $d(y_i, g(x))=d(y_i^\prime, x)$. 
  \begin{center}
\begin{figure}\label{fig:combB}
 \includegraphics[width=0.5 \textwidth]{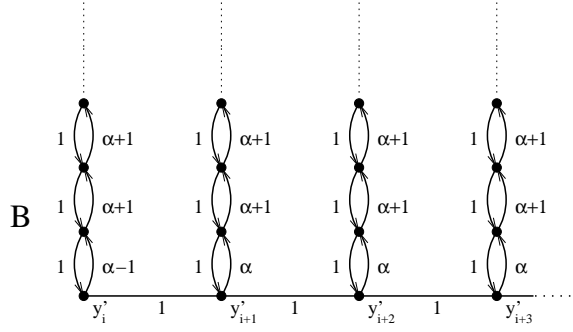}
\caption{The BRW on $B$}
\end{figure}
 \end{center}
 The following theorem can be considered a generalization of Theorem~\ref{thm:uncountableEPVs} in the sense that every BRW which can be projected
 on the comb, including the BRW on $\mathbb{T}_m$, satisfies the same property below (by using $g^{-1}(V_i)$ instead of $V_i$, where $g$ is the projection).
 
 \begin{teo}\label{thm:comb}
   Let $\lambda \in (\lambda_w, \lambda_s]$. If $I_1, I_2 \subseteq \N \setminus\{0\}$ such that
 $\sum_{n \in I_1} 2^{-i} \neq \sum_{n \in I_2} 2^{-i}$ then 
 $\mathbf{q}(\cdot, \bigcup_{i \in I_1} V_i) \neq \mathbf{q}(\cdot, \bigcup_{i \in I_2} V_i)$.
 \end{teo}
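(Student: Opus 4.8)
The plan is to reduce Theorem~\ref{thm:comb} to Theorem~\ref{thm:uncountableEPVs} by exploiting the projection $g$ from the given BRW onto the comb BRW, together with the fact (recorded just after Definition~\ref{def:projection}) that $\mathbf{q}(x,g^{-1}(A))=\widetilde{\mathbf{q}}(g(x),A)$ for all $A\subseteq Y$. Since every BRW under consideration projects onto the comb, and $\bigcup_{i\in I}g^{-1}(V_i)=g^{-1}(\bigcup_{i\in I}V_i)$, it suffices to prove the statement for the comb BRW itself: once $\widetilde{\mathbf{q}}(\cdot,\bigcup_{i\in I_1}V_i)\neq\widetilde{\mathbf{q}}(\cdot,\bigcup_{i\in I_2}V_i)$ is established, pulling back through $g$ (which is surjective) gives the inequality for the general BRW. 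So the real content is the comb analogue of Theorem~\ref{thm:uncountableEPVs}.

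For the comb, I would follow the argument of Theorem~\ref{thm:uncountableEPVs} line by line, replacing $T_{x_i}$ by $V_i$ and $T_{y_i}$ by $\bigcup_{j\ge i}V_j$. The ingredients needed are: (i) an analogue of Lemma~\ref{lem:countable}, i.e. $\mathbf{q}(\cdot,\bigcup_{j\ge n}V_j)<\mathbf{q}(\cdot,\bigcup_{j\ge n+1}V_j)<1$, which follows from Lemma~\ref{lem:smallersurvival} applied with $A=\bigcup_{j\ge n}V_j$, $A_2=\bigcup_{j\ge n+1}V_j$, $A_1=V_n\setminus\{y_n\}$, provided there is positive probability of surviving in $A_1$ without visiting $y_n$ — and this in turn follows from Theorem~\ref{th:strongconditioned}(a) exactly as in Lemma~\ref{lem:countable}, since $\mathbf{q}(\cdot,C_2)<1$ while $\mathbf{q}(\cdot,\{y_n\})=1$ (there is a.s.\ extinction in finite sets when $\lambda\le\lambda_s$), and the confinement of the surviving population to $A_1$ uses that removing $y_n$ disconnects $V_n\setminus\{y_n\}$ from the rest; (ii) an analogue of Lemma~\ref{lem:extinctionline}, namely a.s.\ local extinction on the half-line $\{y_n\}_{n\ge 0}$ along the horizontal axis; (iii) the monotonicity in $I$, $I\subsetneqq J\Rightarrow\mathbf{q}(o,\bigcup_{i\in I}V_i)>\mathbf{q}(o,\bigcup_{i\in J}V_i)$, again from Lemma~\ref{lem:smallersurvival}; and (iv) the key identity \eqref{eq:finiteinfinite}, that for finite $I$ with $i_0=\max I$ one has $\mathbf{q}(o,\bigcup_{i\in I}V_i)=\mathbf{q}(o,\bigcup_{i\in I,\,i<i_0}V_i\,\cup\,\bigcup_{i\ge i_0}V_i)$, which on the comb is even more transparent than on the tree because $\bigcup_{i\ge i_0}V_i$ is literally $T_{y_{i_0}}$ in the comb's notation, so survival in $V_{i_0}$ versus survival in $\bigcup_{i>i_0}V_i$ is governed by extinction on the line. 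Granting (i)--(iv), the combinatorial endgame — introducing $i_0=\min I_1\triangle I_2$, the sets $I_3,I_4$, and splitting into the two cases — is word-for-word the proof of Theorem~\ref{thm:uncountableEPVs}.

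The main obstacle is establishing (ii), the local extinction on the horizontal half-line of the comb, since the slick proof of Lemma~\ref{lem:extinctionline} invoked the transitivity of $\mathbb{T}_3$ and Moyal's result, and the comb is not transitive. I would instead argue directly: the comb BRW projects onto the branching process with rate $\alpha+2$ (stated in the excerpt), and along the axis the relevant quantity $\mathbf{q}(o,\{y_n\}_{n\ge 0})$ can be bounded below by $\lim_{n\to\infty}\mathbf{q}(\cdot,\bigcup_{j\ge n}V_j)$ using that local extinction holds a.s.\ (for $\lambda\le\lambda_s$ there is local extinction, so $\mathbf{q}(\cdot,\{y_n\}_{n\ge 0})\ge\lim_n\mathbf{q}(\cdot,\bigcup_{j\ge n}V_j)$ because visiting the half-line finitely often forces the surviving population into finitely many of the $V_j$'s). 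To see that this limit is $\mathbf{1}$, I would use the BRW on the set $B$ of Figure~3, which projects onto the BRW on $V_i$: by translation invariance along the axis, $\mathbf{q}(o,\bigcup_{j\ge n}V_j)$ equals the extinction probability, seen from a far-away axis vertex, of the BRW in the $V$-direction, and since $\mathbf{q}(\cdot,V_i)\neq\mathbf{q}(\cdot,C_2)$ the analogue of Moyal's argument (a surviving population has to escape to infinity along the axis or die in $V_i$) forces this to tend to $1$. This is the one place requiring genuine adaptation rather than transcription; everything else is a routine translation of Section~\ref{exmp:tree}, which is presumably why the authors only "sketch" the proof.
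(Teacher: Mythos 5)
Your high-level plan---transcribe the proof of Theorem~\ref{thm:uncountableEPVs} onto the comb---is the natural first instinct, but the two steps carrying all the weight are precisely the ones where the tree argument relies on symmetries the comb does not have, and your patches do not close the resulting gaps. (a) For the analogue of Lemma~\ref{lem:countable} you need a vertex of the tooth $A_1=V_n\setminus\{y_n\}$ from which the process survives \emph{inside that tooth} while never visiting $y_n$. On the tree this came from Theorem~\ref{th:strongconditioned} \emph{together with} the rotational symmetry centred at $y_n$, which moves the surviving vertex into the desired branch. At $y_n$ the comb has three pairwise non-isomorphic branches (left half-axis, right half-axis, vertical tooth), so Theorem~\ref{th:strongconditioned} only produces some $w$ that may lie on the axis, where survival takes place in other teeth and says nothing about $V_n$; ``exactly as in Lemma~\ref{lem:countable}'' is not a proof here. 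What is really needed is global survival of the BRW on a half-line with rates $\alpha+1$ upward and $1$ downward, avoiding its base, for all $\lambda>1/(\alpha+2)$; this is true (the drift points away from the base) but requires its own argument. (b) Your item (iv): on the tree, equation~\eqref{eq:finiteinfinite} was obtained from an automorphism exchanging the isomorphic subtrees $T_{x_{i_0}}$ and $T_{y_{i_0}}$, plus extinction on lines. On the comb the corresponding objects, the tooth $V_{i_0}$ and the sub-comb $\bigcup_{j>i_0}V_j$, are not isomorphic, so no such automorphism exists and your claim that the identity is ``even more transparent'' is unfounded; moreover your route to it passes through (ii), extinction on the horizontal axis, for which you admit you have only a heuristic (Moyal's result was invoked via the transitivity of $\mathbb{T}_3$, which the comb lacks).

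The paper resolves both issues with a single device absent from your proposal: the auxiliary BRW $C_2^\prime$ in which the tooth $V_i$ is replaced by the tree-shaped set $B$ of Figure~3, chosen so that $B$ projects onto $V_i$ (hence $\mathbf{q}^\prime(o,B)=\mathbf{q}(o,V_i)$) while containing isomorphic copies $V^\prime_j$ of the teeth $V_j$, $j>i$. Applying Lemma~\ref{lem:smallersurvival} inside $C_2^\prime$, where the branching structure of $B$ makes ``survive in one sub-branch while dying out in another'' available, gives at once the strict monotonicity $\mathbf{q}(o,V_{i+1})=\widetilde{\mathbf{q}}(o,V^\prime_{i+1})>\mathbf{q}^\prime(o,B)=\mathbf{q}(o,V_i)$ and the one-sided inequality $\mathbf{q}(o,\bigcup_{j\in I}V_j)\le\mathbf{q}(o,\bigcup_{j\in\bar I}V_j)$ that replaces \eqref{eq:finiteinfinite}; as already observed in Example~\ref{exmp:modifiedBRW}, an inequality in this direction suffices for the combinatorial endgame, so neither extinction on the axis nor tooth-confined survival is ever needed. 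As written, your proposal is a programme whose two core lemmas remain unproved rather than a proof.
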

  \begin{proof}
We quickly sketch the main steps of the proof. We start by proving the analogous of Lemma~\ref{lem:countable} with $V_i$ instead of 
$T_{y_i}$. To this aim consider the BRW obtained from the BRW on $C_2$ by replacing $V_i$ with $B$ (we identify $y_i$ with $y_i^\prime$): we call
this BRW $C_2^\prime$. Clearly the BRW on $C_2^\prime$ can be projected on the BRW on $C_2$ by simply extending the function $g$ (defined above on $B$)
with the identity map on $C_2^\prime \setminus B \equiv C_2 \setminus V_i$. Clearly 
$\mathbf{q}^\prime (o, B)=\mathbf{q}^\prime (o, g^{-1}(V_i))=\mathbf{q}(o,V_i)$. But in $C_2^\prime$ there are both $V_{i+1}$ and a copy 
$V_{i+1}^\prime=T_{y_{i+2}^\prime}\setminus
T_{y_{i+1}^\prime}\subseteq B$
(the vertical line from $y_{i+1}^\prime$).
By using Lemma~\ref{lem:smallersurvival} we have
\[
 \mathbf{q}(o,V_{i+1})=\widetilde{\mathbf{q}}(o,V_{i+1})=\widetilde{\mathbf{q}}(o, V^\prime_{i+1})> \mathbf{q}^\prime (o, B) =\mathbf{q}(o,V_i)
\]
and from this $\mathbf{q}(x,V_{i+1})> \mathbf{q}(x,V_i)$ for all $x \in C_2$.

The last step, as in Example~\ref{exmp:modifiedBRW}, is to prove 
\[
     \mathbf{q}(o, \bigcup_{j \in I} V_j) \le
  \mathbf{q}(o, \bigcup_{j \in \bar I} V_j)
\]
where 
 $I\subseteq \N \setminus\{0\}$ is finite and  
$\bar I:=\{j  \in I\colon  < i\} \cup \{j > i \}$ ($i=\max I$). 
By using the projection on the BRW on $C_2^\prime$ we have that 
\[
 \mathbf{q}(o, \bigcup_{j \in \bar I} V_j)= \widetilde{ \mathbf{q}}(o, \bigcup_{j \in I\colon j <i} V_j \cup \bigcup_{j >i} V^\prime_j)
\ge \widetilde{\mathbf{q}}(o, \bigcup_{j \in I\colon j <i} V_j \cup B)=\mathbf{q}(o, \bigcup_{j \in I} V_i).
\]
Now we can use it, as in Example~\ref{exmp:modifiedBRW},
instead of equation~\eqref{eq:finiteinfinite} to prove an analogous inequality instead of the equality~\eqref{eq:I3I4}.
The claim then follows easily. 
  
%

 \end{proof}

\section{State of the art and open questions}\label{sec:open}

Let us summarize the (main) known relations between $\{\bar{\mathbf{q}}, \mathbf{1}\}$, $E_G$ and $F_G$ in the irreducible case. 
In between we list some interesting questions that, up to our knowledge, are still open.
Henceforth, we denote the cardinality
of a set by $| \cdot |$.

\noindent\textbf{$\bullet$} As we already noted, $F_G \supseteq E_G \supseteq \{\bar{\mathbf{q}}, \mathbf{1}\}$; moreover
$\bar{\mathbf{q}}=\mathbf{1} \Longleftrightarrow |F_G|=1 \Longleftrightarrow |E_G|=1$. This is equivalent to global extinction.

\noindent\textbf{$\bullet$} $X$ finite $\Longrightarrow F_G=E_G=\{\bar{\mathbf{q}}, \mathbf{1}\}$ (see, for instance, \cite{cf:Spataru89}
or \cite[Corollary 3.4]{cf:BZ14-SLS}). 

\noindent\textbf{$\bullet$} $X$ infinite, $(X,\mu)$ quasi-transitive and $\mathbf{q}(\cdot, x)<\mathbf{1}$ for some $x \in X$ 
$\Longrightarrow E_G=\{\bar{\mathbf{q}}, \mathbf{1}\}$ (here local survival implies strong local survival). 
Indeed, in this case, $\mathbf{q}(\cdot,A)=\bar{\mathbf{q}}$ for all $A \neq \emptyset$.
Whether the cardinality $|F_G \setminus E_G|$ can be positive (finite, countable or uncountable) is an open question.

\noindent\textbf{$\bullet$} $X$ infinite, $(X,\mu)$ quasi-transitive and $\mathbf{q}(\cdot, x)=\mathbf{1}$ for all $x \in X$: our
example in Section~\ref{exmp:tree} shows that $E_G$ can be uncountable. 
The cardinality $|F_G \setminus E_G|$ is unknown.
We conjecture that it can be uncountable, at least when $E_G$ is finite; indeed, we believe that \cite[Example 3.6]{cf:BZ2017} can be extended as explained
in \cite[Remark 3.7]{cf:BZ2017}.

\noindent\textbf{$\bullet$} $X$ infinite and $\mathbf{q}(\cdot, x)<\mathbf{1}$: Example~\ref{exmp:modifiedBRW} shows that 
$E_G$ can be uncountable. The cardinality $|F_G \setminus E_G|$ is unknown.

\noindent\textbf{$\bullet$} $X$ infinite, projected on a branching process: \cite[Example 3.6]{cf:BZ2017} shows that $F_G \setminus E_G$ 
can be uncountable ($E_G=\{\bar{\mathbf{q}}, \mathbf{1}\}$ in this case). 

\noindent\textbf{$\bullet$} In \cite{cf:BraHautHessemberg, cf:BraHautHessemberg2} there are examples of BRWs where either $|E_G|=|F_G|=2$ or
$E_G$ is finite (larger than $2$) and $F_G$ is uncountable.
\smallskip

Other interesting open questions on $|E_G|$ and $F_G$ are the following.

 \noindent\textbf{$\bullet$} Is it possible that $|E_G|<|F_G|<+\infty$?

  \noindent\textbf{$\bullet$} Is it possible that $E_G$ and $F_G\setminus E_G$ are both infinite?
  
\noindent In particular we conjecture that $E_G$ (resp.~$F_G$) is either finite or uncountable.


\section*{Acknowledgements}

The authors are grateful to Sophie Hautphenne for useful discussions.

\end{document}